\documentclass[11pt]{amsart}

\usepackage[a4paper]{geometry}
\geometry{left=3cm,right=3cm,top=2.5cm} 


\usepackage{latexsym, amsthm, amsfonts, amsmath,amssymb,graphicx,lmodern,mathrsfs}

\usepackage[latin1]{inputenc} 
\usepackage[T1]{fontenc} 
\usepackage[english]{babel} 

\newtheorem{theo}{Theorem}[section]
\newtheorem{lem}[theo]{Lemma}
\newtheorem{propo}[theo]{Proposition}
\newtheorem{coro}[theo]{Corollary}
\newtheorem*{theointro}{Theorem}

\theoremstyle{definition}
\newtheorem{defi}[theo]{Definition}

\theoremstyle{remark}
\newtheorem{rem}[theo]{Remark}
\newtheorem{ex}[theo]{Example}

\makeatletter

\@addtoreset{equation}{section}
\makeatother
\def\R{\mathbb{R}}
\def\Z{\mathbb{Z}}
\def\C{\mathbb{C}}
\def\N{\mathbb{N}}
\def\Q{\mathbb{Q}}

\def\z{\zeta}
\def\n{\eta}

\def\s{\sigma}
\def\a{\alpha}
\def\e{\varepsilon}

\def\b{\beta}

\def\n'{\nu}

\def\l{\lambda}

\def\k{\kappa}

\def\G{\Gamma}
\def\L{\Lambda}
\def\T{\Theta}

\def\sq {\sigma_{q}}

\begin{document}
\sloppy
\title[Meromorphic solutions and $q$-Borel-Laplace summation.]{Building meromorphic solutions of $q$-difference equations using a Borel-Laplace summation.}
\author{Thomas Dreyfus}
\address{Université Paris Diderot - Institut de Mathématiques de Jussieu,}
\curraddr{4, place Jussieu 75005 Paris.}
\email{thomas.dreyfus@imj-prg.fr.}
\thanks{Work partially supported by ANR, contract ANR-06-JCJC-0028.}

\subjclass[2010]{Primary 39A13}


\date{\today}


\begin{abstract} 
After introducing $q$-analogs of the Borel and Laplace transformations, we prove that to every formal power series solution of a linear $q$-difference equation with rational coefficients, we may apply several $q$-Borel and Laplace transformations of convenient orders and convenient direction in order to construct a solution of the same equation that is meromorphic on $\C^{*}$. We use this theorem to construct explicitly an invertible matrix solution of a linear $q$-difference system with rational coefficients, of which entries are meromorphic on $\C^{*}$.~Moreover, when the system has two slopes and is put in the Birkhoff-Guenther normal form, we show how the solutions we compute are related to the one constructed by Ramis, Sauloy and Zhang.
 \end{abstract} 

\maketitle

\tableofcontents
\pagebreak[3]
\section*{Introduction}

Let us consider a formal power series $\hat{h}$ solution of a linear differential equation in coefficients in~$\C(z)$, where $z$ is a complex variable. Then, see \cite{B,R85,M95,VdPS}, we may apply to $\hat{h}$ several Borel and Laplace transformations of convenient orders and convenient direction, in order to obtain a solution that is meromorphic in the neighborhood of the origin on a sector of the Riemann surface of the logarithm, and that is asymptotic to $\hat{h}$. This summation process play an important role in differential Galois theory since, for instance, it is involved in the local analytic classification of linear differential equations. See \cite{VdPS}. \par The analytic and algebraic theory of $q$-difference equations has recently obtained many contributions in the spirit of Birkhoff program. See \cite{RSZ}. As in the differential case, the summation of formal power series solutions of linear $q$-difference equations in coefficients in~$\C(z)$ play a major role. The main goal of this paper is to explain how to transform such power series into solutions that are meromorphic on $\C^{*}$, using $q$-analogs of the Borel and Laplace transformations.\\
\begin{center}
$\ast\ast\ast$
\end{center}
 Throughout this paper, $q$ will be a fixed complex number with $|q|>1$. Let us consider as an example, the $q$-version of the Euler equation
$$z\sq y+y=z,$$
where $\sq$ is the dilatation operator that sends $y(z)$ to $y(qz)$. The latter equation  admits as a solution the formal power series with complex coefficients
$$\hat{h}:=\displaystyle\sum_{\ell\in \N}(-1)^{\ell}q^{\frac{\ell(\ell+1)}{2}}z^{\ell+1}.$$
To construct a meromorphic solution of the above equation, we use the Theta function 
$$\T_{q} (z):=\displaystyle \sum_{\ell \in \Z} q^{\frac{-\ell(\ell+1)}{2}}z^{\ell}=\displaystyle \prod_{\ell \in \N} (1-q^{-\ell-1})(1+q^{-\ell-1}z)(1+q^{-\ell}z^{-1}),$$ 
 which is analytic on $\C^{*}$, vanishes on the discrete $q$-spiral $-q^{\Z}:=\{-q^{n},n\in \Z\}$, with simple zero, and satisfies $$\s_{q}\T_{q}(z)=z\T_{q}(z)=\T_{q}\left(z^{-1}\right).$$
Then, for all $\l\in \left(\C^{*}/q^{\Z}\right)\setminus \{-1\}$ the following function $S_{q}^{[\l]}\left(\hat{h}\right)$, which is asymptotic to~$\hat{h}$, is solution of the same equation as $\hat{h}$ and is meromorphic on $\C^{*}$ with simple poles on the $q$-spiral $-\l q^{\Z}$:
$$S_{q}^{[\l]}\left(\hat{h}\right):=\displaystyle \sum_{\ell\in \Z}\frac{1}{1+q^{\ell}\l}\times\frac{1}{\T_{q}\left(\frac{q^{1+\ell}\l}{z}\right)}.$$
 
More generally, consider a formal power series solution of a linear $q$-difference equation in coefficients in $\C(z)$. Although there are several $q$-analogs of the Borel and Laplace transformations, see \cite{Abdi60,Abdi64,D3,DVZ,MZ,R92,RZ,Z99,Z00,Z01,Z02,Z03}, and we know the existence of a solution of the same equation that is meromorphic on $\C^{*}$, see \cite{Pra}, we did not know until this paper how to compute in general such solution using $q$-analogs of the Borel and Laplace transformations. \par 
Let $(\mu,K)\in \Q_{>0}\times \N^{*}$ with $K\in \mu\,\N^{*}$, and $\l\in \C^{*}/q^{K^{-1}\Z}$. Following \cite{R92,Z02}, we define the $q$-analogs of the Borel and Laplace transformations we will use, see Definition~\ref{defi1} for more precisions:
$$\begin{array}{ll}
\hat{\mathcal{B}}_{\mu}:\displaystyle\sum_{\ell\in \N} a_{\ell}z^{\ell}\longmapsto\displaystyle\sum_{\ell\in \N} a_{\ell}q^{\frac{-\ell(\ell-1)}{2\mu}} \z^{\ell},&\mathcal{L}_{\mu,K}^{[\l]}:f\longmapsto\dfrac{\mu}{K}
\displaystyle \sum_{\ell\in K^{-1}\Z}\frac{f\left(q^{\ell}\l\right)}{\T_{q^{1/\mu}}\left(\frac{q^{\frac{1}{\mu}+\ell}\l}{z}\right)}.
\end{array}
$$

We now present our main result, see Theorem \ref{theo1} and Proposition \ref{propo3} for a more precise statement.\\
\begin{theointro}
\textit{Let $\hat{h}$ be a formal power series solution of a linear $q$-difference equation in coefficients in $\C(z)$. There exist $\k_{1},\dots,\k_{r}\in \Q_{>0}$, $n,K\in \N^{*}$ and a finite set ${\Sigma\subset \C^{*}/q^{n^{-1}\Z}}$, we may compute from the $q$-difference equation, such that for all $\l\in  \left(\C^{*}/q^{n^{-1}\Z}\right)\setminus \Sigma$,
$$S_{q}^{[\l]}\left(\hat{h}\right):=\mathcal{L}_{\k_{r},n}^{[\l]}\circ\mathcal{L}_{\k_{r-1},K}^{[\l]}\circ\dots\circ \mathcal{L}_{\k_{1},K}^{[\l]}\circ\mathcal{\hat{B}}_{\k_{1}}\circ \dots\circ\mathcal{\hat{B}}_{\k_{r}}\left(\hat{h}\right),$$
is meromorphic on $\C^{*}$, and is solution of the same equation as $\hat{h}$. Moreover, $S_{q}^{[\l]}\left(\hat{h}\right)$ is asymptotic to $\hat{h}$ and, for $|z|$ close to $0$ it has pole of order at most $1$ that are contained in the $q^{1/n}$-spiral $-\l q^{n^{-1}\Z}$.}
\end{theointro}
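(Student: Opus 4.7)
The plan is to reduce the equation to a linear $q$-difference system, read off the summation slopes $\k_1,\dots,\k_r$ from its Newton polygon, and then carry out a slope-by-slope $q$-Borel--Laplace summation. First I would replace the scalar equation by a companion system $\sq Y = AY$ with $A$ an invertible matrix over $\C(z)$, so that $\hat h$ appears as the first coordinate of a formal vector solution~$\hat Y$. By a gauge transformation over $\C(z)$, the system may be put in a form compatible with its Newton polygon, which determines the positive rationals $\k_1<\dots<\k_r$ and the integer $n$ appearing in the statement.

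The technical core consists in two intertwining properties. The first is that, for any slope $\mu$ of the equation, the $q$-Borel transform $\hat{\mathcal{B}}_{\mu}$ sends a formal series solution of a $q$-difference equation with slopes $\k_1<\dots<\k_r$ to a formal series that is a solution of a new $q$-difference equation whose slope at~$\mu$ has been flattened to~$0$, the other slopes being shifted in a controlled manner. The second is that, on the appropriate spaces of germs, the Laplace transform $\mathcal{L}_{\mu,K}^{[\l]}$ is a right inverse of $\hat{\mathcal{B}}_{\mu}$ and commutes with the corresponding $q$-difference operator, so that applying $\mathcal{L}_{\mu,K}^{[\l]}$ reverses the effect of $\hat{\mathcal{B}}_{\mu}$ on equations. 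Both properties can be verified by direct computation on the monomials $z^{\ell}$ and extended by linearity and continuity to formal or holomorphic inputs.

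Granting these two facts, I would apply $\hat{\mathcal{B}}_{\k_r}$, $\hat{\mathcal{B}}_{\k_{r-1}}$, \dots, $\hat{\mathcal{B}}_{\k_1}$ to~$\hat h$ in succession, each step killing one slope. After the last Borel transform no positive slope remains, so the resulting series is convergent and defines a germ of meromorphic function at~$0$; iterating the $q$-difference equation it satisfies and its inverse then extends this germ to a meromorphic function on~$\C^{*}$, with the exception of finitely many $q^{1/n}$-spirals coming from the singularities of the rational coefficients. I would then apply the Laplace transforms $\mathcal{L}_{\k_1,K}^{[\l]}, \dots, \mathcal{L}_{\k_{r-1},K}^{[\l]}, \mathcal{L}_{\k_r,n}^{[\l]}$ in order, checking at each step that the previous function has sufficient growth along the direction~$\l$ for the series defining the Laplace transform to converge, and that $\l$ avoids the $q^{1/\k_i}$-spirals on which the corresponding Theta denominators vanish; these constraints together produce the exceptional finite set~$\Sigma$.

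The combined intertwining statements then show that $S_{q}^{[\l]}(\hat h)$ is meromorphic on~$\C^{*}$, solves the original equation, and is asymptotic to~$\hat h$. The pole structure is read directly from the last Laplace transform: $\mathcal{L}_{\k_r,n}^{[\l]}$ has $\T_{q^{1/n}}\bigl(q^{1/\k_r+\ell}\l/z\bigr)$ in its denominator, whose simple zeros lie exactly on the spiral $-\l q^{n^{-1}\Z}$. The main obstacle is the Borel intertwining with control on slopes, namely showing that the chain of Borel transforms strictly decreases the maximal slope at each step rather than merely lowering a Gevrey order, so that the final output is genuinely convergent. A secondary difficulty is ensuring the compatibility of the chained Laplace transforms, since each intermediate function must lie in the domain of the next; it is this requirement that dictates both the form of~$\Sigma$ and the presence of the two distinct indices $K$ and~$n$.
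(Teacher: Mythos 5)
Your overall architecture --- kill the positive slopes one at a time with $q$-Borel transforms, check at each stage that the resulting function has the growth required by the next $q$-Laplace transform, and use the intertwining relations with $z$ and $\sigma_q$ to propagate the equation --- is the same as the paper's. But there is a genuine error in your choice of the orders $\kappa_1,\dots,\kappa_r$. You identify them with the positive slopes $\mu_1<\dots<\mu_r$ of the Newton polygon, and simultaneously assert that each $\hat{\mathcal{B}}_{\kappa_i}$ ``kills one slope'' while ``the other slopes are shifted''. These two statements are incompatible: after $\hat{\mathcal{B}}_{\mu_r}$ the remaining positive slopes are $\left(\mu_i^{-1}-\mu_r^{-1}\right)^{-1}$, so the new top slope is $\left(\mu_{r-1}^{-1}-\mu_r^{-1}\right)^{-1}>\mu_{r-1}$, and applying $\hat{\mathcal{B}}_{\mu_{r-1}}$ next does not flatten it --- it sends it to the negative value $-\mu_r$ and scrambles the remaining slopes. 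The correct orders are $\kappa_i^{-1}:=\mu_i^{-1}-\mu_{i+1}^{-1}$ with $\mu_{r+1}:=+\infty$ (so $\kappa_r=\mu_r$): then each $\kappa_i$ is exactly the top slope of the operator obtained after the previous Borel steps, and the reciprocals telescope, $\sum\kappa_i^{-1}=\mu_1^{-1}$, so the composite Borel transform is $\hat{\mathcal{B}}_{\mu_1}$ and the final series converges with the expected order. With your choice the composite is $\hat{\mathcal{B}}_{\nu}$ with $\nu^{-1}=\sum\mu_i^{-1}>\mu_1^{-1}$, and the subsequent Laplace steps no longer match either the growth of the intermediate functions or the Gevrey order needed for the asymptotic statement.

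Second, the point you defer as a ``secondary difficulty'' --- that each intermediate function lies in the domain $\mathbb{H}^{[\lambda]}_{\kappa_{i+1},K}$ of the next Laplace transform --- is the analytic core of the proof and does not follow from meromorphy on $\C^{*}$ alone. The paper derives it from the shape of the Borel-transformed operator $Q=\sum b_i\sigma_q^i$: because the top slope has been flattened, the leading coefficient dominates in the precise sense that $\deg\left(b_i/b_{m_2}\right)\leq (m_2-i)\mu$; dividing the unknown by a suitable power of $\T_{|q|^{1/K}}\left(M_0|\zeta|\right)$ then turns the equation into one whose non-leading coefficients are uniformly small for $|\zeta|$ large, which forces the normalized solution to remain bounded along the spiral $\lambda q^{K^{-1}\Z}$, i.e.\ gives the $q$-exponential growth of order $1$ at infinity. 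Without an argument of this kind, ``checking at each step that the previous function has sufficient growth'' is not a proof, the chain of Laplace transforms is not known to be defined, and the finite exceptional set $\Sigma$ is not produced. A similar quantitative estimate (the composition rule $\mu_2^{-1}=\mu^{-1}+\mu_1^{-1}$ for the asymptotic relation $\sim^{[\lambda]}_{\mu,K}$ under one Laplace transform) is needed for the asymptotic claim, which you assert but do not establish.
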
 

In $\S \ref{sec2}$, we give two applications of our main result. Consider a linear $q$-difference system of the form
$$
\sq Y=BY,
$$
where $B\in \mathrm{GL}_{m}\Big(\C(z)\Big)$, that is an invertible matrix with entries in $\C(z)$. Praagman in \cite{Pra} has proved that the above equation admits a fundamental solution, that is an invertible matrix solution, of which entries are meromorphic on $\C^{*}$. The proof is based on purely theoretical argument and is not constructive. Using the formal local classification of $q$-difference systems of \cite{VdPR}, and our main result,  we show how to construct a fundamental solution of $\sq Y=BY$, of which entries are meromorphic on $\C^{*}$. See Corollary \ref{coro1}.\\\par 
On the other side, the meromorphic solutions play a major role in Galois theory of $q$-difference equations, see \cite{Bu,BuT,DVRSZ,Ro08,RS07,RS09,RSZ,S00,S03,S04b,S04}. In particular the meromorphic solutions are used to prove the descent of the Galois group to the field $\C$, instead of the field of functions invariant under the action of~$\sq$, that is, the field of meromorphic functions on the torus $\C^{*}\setminus q^{\Z}$. Note that this latter field can be identified with the field of elliptic functions. In \cite{RSZ} the meromorphic solutions are obtained via successive gauge transformation. We show in a particular case, see Theorem~\ref{theo3}, how the solutions we compute with $q$-Borel and $q$-Laplace transformations are related to the meromorphic solutions appearing in \cite{RSZ}.\\\par 
\textbf{Acknowledgments.}
The author would like to thank Changgui Zhang, Jacques Sauloy, and the anonymous referees, for their suggestions to improve the quality of the paper. 

\pagebreak[3]
\section{Meromorphic solutions of linear $q$-difference equations.}\label{sec1}
 Let $\C[[z]]$ be the ring of formal power series and let $\hat{h}\in\C[[z]]$ be a solution of a linear $q$-difference equation in coefficients in $\C(z)$. The formal series $\hat{h}$ might diverges but we will see that we can construct from $\hat{h}$, a solution of the same equation that is meromorphic on~$\C^{*}$, and  that is asymptotic to $\hat{h}$. In $\S \ref{sec11}$, we define $q$-analogs of Borel and Laplace transformations. In $\S \ref{sec12}$ we prove that we might apply several $q$-Borel and $q$-Laplace transformations to~$\hat{h}$, to construct a solution  of the same linear $q$-difference equation that is meromorphic on $\C^{*}$, and  that is asymptotic to $\hat{h}$.
\pagebreak[3]
\subsection{Definition of $q$-analogs of Borel and Laplace transformations.}\label{sec11}
We start with the definition of the $q$-Borel and the $q$-Laplace transformations. Note that the $q$-Borel transformation was originally introduced in \cite{R92}. When $q>1$ is real and $\mu=K=1$, we recover the $q$-Laplace transformation and the functional space introduced in \cite{Z02}, Theorem 1.2.1. Earlier introductions of $q$-Laplace transformations can be found in \cite{Abdi60,Abdi64}. Those latter behave differently than our $q$-Laplace transformation, since they involve $q$-deformations of the exponential, instead of the Theta function. \\ \par 

Throughout the paper, we will say that two analytic functions are equal if their analytic continuation coincide. We fix, once for all a determination of the logarithm over the Riemann surface of the logarithm we call $\log$. If $\a\in \C$, then, we write $q^{\a}$ instead of~$e^{\a\log (q)}$. One has $q^{\a+\b}=q^{\a}q^{\b}$, for all $\a,\b\in \C$.  

\pagebreak[5]
\begin{defi}\label{defi1}
\begin{trivlist}
Let $(\mu,K)\in \Q_{>0}\times \N^{*}$ with $K\in \mu\,\N^{*}$, and $\l \in \C^{*}/q^{K^{-1}\Z}$.
\item (1) We define the $q$-Borel transformation of order $\mu$ as follows
$$
\begin{array}{llll}
\hat{\mathcal{B}}_{\mu}:&\C[[z]]&\longrightarrow&\C[[\z]]\\
&\displaystyle\sum_{\ell\in \N} a_{\ell}z^{\ell}&\longmapsto&\displaystyle\sum_{\ell\in \N} a_{\ell}q^{\frac{-\ell(\ell-1)}{2\mu}} \z^{\ell}.
\end{array}
$$
\item (2) Let  $\mathcal{M}(\C^{*},0)$ be the field of functions that are meromorphic on some punctured neighborhood of $0$ in~$\C^{*}$. An element $f$ of $\mathcal{M}(\C^{*},0)$ is said to belongs to $\mathbb{H}_{\mu,K}^{[\l]}$ if there exist~$\e>0$ and a connected domain $\Omega \subset \C$, such that:
\begin{itemize}
\item  $\displaystyle\bigcup_{\ell\in K^{-1}\Z}\Big\{z\in \C^{*}\Big| \left|z-\l q^{\ell}\right|<\e\left| q^{\ell}\l\right| \Big\}\subset \Omega.$
\item The function $f$ can be continued to an analytic function on $\Omega$ with $q^{1/\mu}$-exponential growth of order $1$ at infinity, which means that there exist constants $L,M>0$, such that for all $z\in \Omega$:
$$|f(z)|<L\Big|\T_{|q|^{1/\mu}}(M|z|)\Big| .$$
\end{itemize}
An element $f$ of $\mathcal{M}(\C^{*},0)$ is said to belongs to $\mathbb{H}_{\mu,K}$, if there exists a finite set~${\Sigma \subset \C^{*}/q^{K^{-1}\Z}}$, such that for all $\l \in \left(\C^{*}/q^{K^{-1}\Z}\right)\setminus \Sigma $, we have $f\in \mathbb{H}_{\mu,K}^{[\l]}$.
\item (3)   Because of Lemma \ref{lem2} below, the following map is defined and is called $q$-Laplace transformation of order $\mu$: 

$$
\begin{array}{llll}
\mathcal{L}_{\mu,K}^{[\l]}:&\mathbb{H}_{\mu,K}^{[\l]}&\longrightarrow&\mathcal{M}(\C^{*},0)\\
&f&\longmapsto&\dfrac{\mu}{K}
\displaystyle \sum_{\ell\in K^{-1}\Z}\frac{f\left(q^{\ell}\l\right)}{\T_{q^{1/\mu}}\left(\frac{q^{\frac{1}{\mu}+\ell}\l}{z}\right)}.
\end{array}
$$

For $|z|$ close to $0$, the function $\mathcal{L}_{\mu,K}^{[\l]}\left(f\right)(z)$ has poles of order at most $1$ that are contained in the~$q^{1/K}$-spiral $-q^{K^{-1}\Z}\l$. 
\end{trivlist}
\end{defi} 

\pagebreak[3]
\begin{rem}
The necessity of the factor $\frac{\mu}{K}$ in $\mathcal{L}_{\mu,K}^{[\l]}$ will appear clearly in Remark \ref{rem4}. 
\end{rem}

The following lemma generalizes \cite{Z02}, Lemma 1.3.1,  when $q$ is not real. Since the proof is basically the same, it will be only sketched.

\pagebreak[3]
\begin{lem}\label{lem2}
Let $(\mu,K)\in \Q_{>0}\times \N^{*}$ with $K\in \mu\,\N^{*}$, $\e>0$ and let us define $$\Omega:=\displaystyle\bigcap_{\ell\in K^{-1}\Z}\Big\{z\in \C^{*}\Big| \left|z+ q^{\ell}\right|\geq\e \left|q^{\ell} \right|\Big\}.$$
There exists $C>0$ such that we have on the domain $\Omega$:
$$\left| \T_{q^{1/\mu}}(z)\right|\geq C\left|\T_{|q|^{1/\mu}}(|z|)\right|.$$
\end{lem}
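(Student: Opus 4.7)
The plan is to exploit the functional equations of the Theta functions to show that the ratio
$R(z) := |\Theta_{p}(z)|/\Theta_{|p|}(|z|)$
(with $p := q^{1/\mu}$) is $p$-invariant on $\C^{*}$, and then to deduce the bound from compactness of the complex torus $\C^{*}/p^{\Z}$.

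First I would check that both $\Omega$ and $R$ are invariant under $z \mapsto pz$. For $\Omega$, rewriting $|pz+q^{\ell}| = |p|\cdot|z+q^{\ell-1/\mu}|$ and using that $K\in\mu\,\N^{*}$ forces $1/\mu\in K^{-1}\Z$, the shift $\ell\mapsto\ell-1/\mu$ is a bijection of $K^{-1}\Z$ with itself and the defining inequality transports accordingly. For $R$, the identity $\Theta_{p}(pz) = z\,\Theta_{p}(z)$ together with the analogous identity for $\Theta_{|p|}$ applied at the real point $|p|\cdot|z|$ immediately gives $R(pz) = R(z)$.

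Next I would descend to the compact quotient. On $\Omega$ the numerator is nonzero because the zeros $-p^{\Z}$ of $\Theta_{p}$ are contained in $-q^{K^{-1}\Z}$ (as $K/\mu \in \N^{*}$) and are therefore excluded by the $\e$-condition; the denominator $\Theta_{|p|}(|z|)$ is strictly positive since its zeros lie on the negative real axis while $|z|>0$. Hence $R$ is continuous and strictly positive on $\Omega$, and by $p$-invariance it descends to a continuous positive function on the closed subset $\Omega/p^{\Z}$ of the compact torus $\C^{*}/p^{\Z}$; it therefore attains a strictly positive minimum $C>0$, and this is the constant claimed. The only slightly subtle point is the first one, ensuring that the fractional exponent $1/\mu$ preserves the indexing set $K^{-1}\Z$, which is exactly the content of the hypothesis $K\in\mu\,\N^{*}$; once this is in place, the remaining arguments are formal.
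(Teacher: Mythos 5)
Your proof is correct and follows essentially the same route as the paper: both exploit the $q^{1/\mu}$-invariance of the ratio $\bigl|\T_{q^{1/\mu}}(z)\bigr|/\bigl|\T_{|q|^{1/\mu}}(|z|)\bigr|$ (and of $\Omega$, via $1/\mu\in K^{-1}\Z$), then obtain the positive lower bound by continuity and non-vanishing on a compact set — the paper uses the fundamental annulus $\Omega\cap\{1\le|z|\le|q|^{1/\mu}\}$ where you pass to the closed image in the torus $\C^{*}/q^{\mu^{-1}\Z}$, which is the same argument.
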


\begin{proof}
  Since the function $\left| \frac{\T_{q^{1/\mu}}(z)}{\T_{|q|^{1/\mu}}(|z|)}\right|$ is invariant under the action of ${\sq^{1/\mu}:=\sigma_{q^{1/\mu}}}$, we just have to prove that we have the inequality for $$z\in \displaystyle\Omega\bigcap \Big\{z\in \C^{*}, |z|\in \left[1,|q|^{1/\mu}\right]\Big\}=:\G.$$
We remind that $\T_{q^{1/\mu}}(z)$ is analytic on $\C^{*}$ and vanishes on the discrete $q^{1/\mu}$-spiral $-q^{\Z/\mu}$. Therefore, the function $f:=\left| \frac{\T_{q^{1/\mu}}(z)}{\T_{|q|^{1/\mu}}(|z|)}\right|$ is continuous and does not vanish on~$\G$. Since $\G$ is compact, $f$ admits a minimum $C>0$ on $\G$. This yields the result.
\end{proof}

The following lemma will be needed in $\S \ref{sec12}$.
\pagebreak[3]
\begin{lem}\label{lem1}
Let $(\mu,K)\in \Q_{>0}\times \N^{*}$ with $K\in \mu\,\N^{*}$, $\hat{h}\in \C[[z]]$, $\l\in \C^{*}/q^{K^{-1}\Z}$ and $g\in  \mathbb{H}_{\mu,K}^{[\l]}$. Then \\
\begin{itemize}
\item $\hat{\mathcal{B}}_{\mu}\left(\sq^{1/K}\hat{h}\right)= \sq^{1/K}\hat{\mathcal{B}}_{\mu}\left( \hat{h}\right)$, where  ${\sq^{1/K}:=\sigma_{q^{1/K}}}$.\\
\item $\hat{\mathcal{B}}_{\mu}\left( z\sq^{1/\mu}\hat{h}\right)= \z\hat{\mathcal{B}}_{\mu}\left( \hat{h}\right)$.\\
\item $\mathcal{L}_{\mu,K}^{[\l]}\left(\sq^{1/K} g\right)= \sq^{1/K}\mathcal{L}_{\mu,K}^{[\l]}\left( g\right)$. \\
\item $\mathcal{L}_{\mu,K}^{[\l]}\left( \z g\right)= z\sq^{1/\mu}\mathcal{L}_{\mu,K}^{[\l]}\left( g\right)$.
\end{itemize}
\end{lem}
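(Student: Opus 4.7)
The four identities are all direct calculations; the first two take place at the level of formal power series, while the last two rely on the functional equation $\Theta_{q^{1/\mu}}(q^{1/\mu}w)=w\,\Theta_{q^{1/\mu}}(w)$ together with a translation of the summation index.

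For the first identity, writing $\hat{h}=\sum_{\ell}a_{\ell}z^{\ell}$, the operator $\sigma_{q^{1/K}}$ multiplies the coefficient $a_\ell$ by $q^{\ell/K}$, while $\hat{\mathcal{B}}_\mu$ multiplies the coefficient indexed by $\ell$ by $q^{-\ell(\ell-1)/(2\mu)}$. Since these two scalar factors depend only on $\ell$, the two operations commute term by term. For the second identity, one computes that $\hat{\mathcal{B}}_\mu(z\sigma_{q^{1/\mu}}\hat{h})$ has $(\ell+1)$-th coefficient $a_\ell\,q^{\ell/\mu}\,q^{-(\ell+1)\ell/(2\mu)}$, and the exponent simplifies to $-\ell(\ell-1)/(2\mu)$, which is exactly what multiplication by $\zeta$ produces from $\hat{\mathcal{B}}_\mu(\hat{h})$.

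For the third identity, I would start from the defining sum
\[
\mathcal{L}_{\mu,K}^{[\lambda]}(\sigma_{q^{1/K}}g)(z)=\frac{\mu}{K}\sum_{\ell\in K^{-1}\Z}\frac{g(q^{\ell+1/K}\lambda)}{\Theta_{q^{1/\mu}}(q^{1/\mu+\ell}\lambda/z)},
\]
and reindex by $\ell'=\ell+1/K$, which is a bijection of $K^{-1}\Z$ with itself. The resulting expression is precisely $\mathcal{L}_{\mu,K}^{[\lambda]}(g)(q^{1/K}z)=\sigma_{q^{1/K}}\mathcal{L}_{\mu,K}^{[\lambda]}(g)(z)$. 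The mild subtlety here is convergence: the reindexed series still satisfies the hypotheses of Lemma \ref{lem2}, so the rearrangement is legitimate, and one may invoke the hypothesis $g\in\mathbb{H}_{\mu,K}^{[\lambda]}$ to justify the termwise manipulation.

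For the fourth identity, I would compute
\[
z\,\sigma_{q^{1/\mu}}\mathcal{L}_{\mu,K}^{[\lambda]}(g)(z)=\frac{\mu z}{K}\sum_{\ell\in K^{-1}\Z}\frac{g(q^{\ell}\lambda)}{\Theta_{q^{1/\mu}}(q^{\ell}\lambda/z)},
\]
and apply the functional equation $\Theta_{q^{1/\mu}}(q^{1/\mu}w)=w\,\Theta_{q^{1/\mu}}(w)$ with $w=q^{\ell}\lambda/z$ to replace $\Theta_{q^{1/\mu}}(q^{\ell}\lambda/z)$ by $(z/(q^{\ell}\lambda))\,\Theta_{q^{1/\mu}}(q^{1/\mu+\ell}\lambda/z)$. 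The factor $z$ cancels and one is left with the factor $q^{\ell}\lambda=\zeta|_{\zeta=q^{\ell}\lambda}$ multiplying $g(q^{\ell}\lambda)$, which is exactly $\mathcal{L}_{\mu,K}^{[\lambda]}(\zeta g)(z)$.

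There is no real obstacle: the proof is a bookkeeping exercise in the two quasi-periodicity relations (multiplicative for $\sigma_{q^{1/K}}$, translational for $\Theta_{q^{1/\mu}}$) and in the exponent arithmetic of $q^{-\ell(\ell-1)/(2\mu)}$. The only point that requires care is to verify that the shift $\ell\mapsto\ell\pm 1/K$ (respectively $\pm 1/\mu$) preserves the lattice $K^{-1}\Z$ and the convergence domain underlying Definition \ref{defi1}, which follows from the standing assumption $K\in\mu\N^{*}$.
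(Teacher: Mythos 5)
Your proof is correct and follows essentially the same route as the paper: the first two identities by coefficientwise exponent arithmetic, the third by the reindexing $\ell\mapsto\ell+1/K$ of the lattice $K^{-1}\Z$, and the fourth by the quasi-periodicity $\T_{q^{1/\mu}}(q^{1/\mu}w)=w\,\T_{q^{1/\mu}}(w)$ (which you run in the reverse direction, from $z\sq^{1/\mu}\mathcal{L}_{\mu,K}^{[\l]}(g)$ back to $\mathcal{L}_{\mu,K}^{[\l]}(\z g)$, but the computation is the same). No gaps.
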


\begin{proof}
The two first equalities are straightforward computations. Let us prove the third equality. Since $K\in \mu\,\N^{*}$, we obtain $K^{-1}\Z+\mu^{-1}=K^{-1}\Z$. 
Then,
$$\begin{array}{lll}
\mathcal{L}_{\mu,K}^{[\l]}\left( \sq^{1/K}g\right)&=&\dfrac{\mu}{K}
\displaystyle \sum_{\ell\in K^{-1}\Z}\frac{g\left(q^{\frac{1}{K}+\ell}\l\right)}{\T_{q^{1/\mu}}\left(\frac{q^{\frac{1}{\mu}+\ell}\l}{z}\right)}\\
&=&\dfrac{\mu}{K}
\displaystyle \sum_{\ell\in K^{-1}\Z}\frac{g\left(q^{\ell}\l\right)}{\T_{q^{1/\mu}}\left(\frac{q^{\frac{1}{\mu}+\ell}\l}{q^{1/K} z}\right)} =\sq^{1/K}\mathcal{L}_{\mu,K}^{[\l]}\left(  g\right).
\end{array} $$

 Let us now prove the last equality.
$$\begin{array}{lll}
\mathcal{L}_{\mu,K}^{[\l]}\left( \z g\right)&=&\dfrac{\mu}{K}
\displaystyle \sum_{\ell\in K^{-1}\Z}\frac{q^{\ell}\l g\left(q^{\ell}\l\right)}{\T_{q^{1/\mu}}\left(\frac{q^{\frac{1}{\mu}+\ell}\l}{z}\right)}\\
&=&z \dfrac{\mu}{K}
\displaystyle \sum_{\ell\in K^{-1}\Z}\frac{g\left(q^{\ell}\l\right)}{\T_{q^{1/\mu}}\left(\frac{q^{\ell}\l}{z}\right)}=z\sq^{1/\mu}\mathcal{L}_{\mu,K}^{[\l]}\left( g\right).
\end{array} $$

\end{proof}

\pagebreak[3]
\begin{rem}\label{rem1}
 Let us keep the same notations as in Lemma \ref{lem1} and assume that ${\hat{\mathcal{B}}_{\mu}\left(\hat{h}\right)\in  \mathbb{H}_{\mu,K}^{[\l]}}$.  Using Lemma \ref{lem1}, it follows that
$$\sq^{1/K}\left(\mathcal{L}_{\mu,K}^{[\l]}\circ\hat{\mathcal{B}}_{\mu}\left( \hat{h}\right)\right)=\mathcal{L}_{\mu,K}^{[\l]}\circ\hat{\mathcal{B}}_{\mu}\left(\sq^{1/K} \hat{h}\right) 
\hbox{ and }
z\mathcal{L}_{\mu,K}^{[\l]}\circ\hat{\mathcal{B}}_{\mu}\left( \hat{h}\right)=\mathcal{L}_{\mu,K}^{[\l]}\circ\hat{\mathcal{B}}_{\mu}\left(z\hat{h}\right) .
$$
In particular, if we additionally assume that $\hat{h}\in \C[[z]]$  is solution of a linear $q$-difference equation in coefficients in $\C[z]$, then $\mathcal{L}_{\mu,K}^{[\l]}\circ\hat{\mathcal{B}}_{\mu}\left( \hat{h}\right)\in\mathcal{M}(\C^{*},0)$ is solution of the same equation as~$\hat{h}$. Since the latter solution belongs to $\mathcal{M}(\C^{*},0)$, we obtain automatically that it belongs to $\mathcal{M}(\C^{*})$, the field of functions that are meromorphic on~$\C^{*}$.\par 
More generally, if $\hat{h}\in \C[[z]]$  is solution of a linear $q$-difference equation in coefficients in~$\C[z]$, we wonder if there exists~$(\mu,K)\in \Q_{>0}\times \N^{*}$ with $K\in \mu\,\N^{*}$, so that we have~${\hat{\mathcal{B}}_{\mu}\left(\hat{h}\right)\in  \mathbb{H}_{\mu,K}}$. 
Unfortunately the answer is no as shown the following example and we will have to apply successively several $q$-Borel and $q$-Laplace transformations to obtain a meromorphic solution. See Theorem~\ref{theo1}. 
\end{rem}

\pagebreak[3]
\begin{ex}
 Let us consider the formal power series ${\hat{h}:=\left(\displaystyle\sum_{\ell\in \N}q^{\frac{\ell(\ell-1)}{2}}z^{\ell}\right)^{2}\in \C[[z]]}$, which is solution of the linear $q$-difference equation 
$$\left(q^{2}z^{3}\sq^{2}-z(z+1)\sq+1\right)\hat{h}=1+z. $$
We claim that for all $(\mu,K)\in \Q_{>0}\times \N^{*}$ with $K\in \mu\,\N^{*}$, we have  ${\hat{\mathcal{B}}_{\mu}\left(\hat{h}\right)\notin  \mathbb{H}_{\mu,K}}$.
Note that this series was already used in \cite{MZ},~Page~1872, as a similar counter-example. 
In the latter paper, it is shown that the series $\hat{\mathcal{B}}_{\mu}\left(\hat{h}\right)$ belongs to $\C\{\z\}$, the ring of germ of analytic functions at $0$, if and only if $\mu\leq 1$. Using Lemma \ref{lem1}, we find that for all $j\in \N$, $i\in \Q$, $\mu\in \Q_{>0}$ and $\hat{f}\in \C[[z]]$,
\begin{equation}\label{eq2}
\hat{\mathcal{B}}_{\mu}\left(z^{j}\sq^{i}\hat{f}\right)=\frac{\z^{j}\left(\sq\right)^{i-\frac{j}{\mu}}\hat{\mathcal{B}}_{\mu}\left(\hat{f}\right)}{q^{\frac{j(j-1)}{2\mu}}}.
\end{equation} 
Let $\mu \leq 1$. Following (\ref{eq2}), we obtain that $\hat{\mathcal{B}}_{\mu}\left(\hat{h}\right)$ is solution of 
$$\left(q^{2-3/\mu}\z^{3}\sq^{2-3/\mu}-q^{-1/\mu}\z^{2}\sq^{1-2/\mu}-\z\sq^{1-1/\mu}+1\right)\left(\hat{\mathcal{B}}_{\mu}\left(\hat{h}\right)\right)=1+\z. $$
Let $M>0$. Using the $q$-difference equation satisfied by $\T_{|q|^{1/\mu}}(M|\z|)$, we find that $$f:=\frac{\hat{\mathcal{B}}_{\mu}\left(\hat{h}\right)}{\T_{|q|^{1/\mu}}(M|\z|)}$$ is solution of 
\begin{equation}\label{eq7}
\begin{array}{l}
q^{2-3/\mu}M^{2\mu-3}|\z|^{2\mu}\left(\frac{\z}{|\z|}\right)^{3}\sq^{2-3/\mu}f-q^{-1/\mu}M^{\mu-2}|\z|^{\mu}\left(\frac{\z}{|\z|}\right)^{2}\sq^{1-2/\mu}f\\
-M^{\mu-1}|\z|^{\mu}\left(\frac{\z}{|\z|}\right)\sq^{1-1/\mu}f+f=\frac{1+\z}{\T_{|q|^{1/\mu}}(M|\z|)}.
\end{array}\end{equation}
Since $\mu\leq 1$, (\ref{eq7}) yields that $\frac{\hat{\mathcal{B}}_{\mu}\left(\hat{h}\right)}{\T_{|q|^{1/\mu}}(M|\z|)}$ can be continued to a meromorphic function on~$\C^{*}$ with no poles, if $\mu<1$, and with poles of order $1$ in the $q$-spiral $q^{\N}:=\{q^{n},n\in \N\}$ if $\mu=1$.
Moreover, for all $K\in \mu\,\N^{*}\bigcap\N^{*}$, $\l\in \C^{*}/q^{K^{-1}\Z}$, $L>0$, there exists $\ell \in \Z$,  such that 
$$\left|\frac{\hat{\mathcal{B}}_{\mu}\left(\hat{h}\right)\left(q^{\ell/K}\l\right)}{\T_{|q|^{1/\mu}}(M|q^{\ell/K}\l|)}\right|>L.$$
Here, we have made the convention, that if the meromorphic  function $f$ has a pole in~$\z_{0}$, then $|f(\z_{0})|=+\infty$. Hence, for all $K\in \mu\,\N^{*}\bigcap\N^{*}$ and for all $\l\in \C^{*}/q^{K^{-1}\Z}$, we have
 ${\hat{\mathcal{B}}_{\mu}\left(\hat{h}\right)\notin \mathbb{H}_{\mu,K}^{[\l]}}$. To conclude,  for all $(\mu,K)\in \Q_{>0}\times \N^{*}$ with $K\in \mu\,\N^{*}$, we find ${\hat{\mathcal{B}}_{\mu}\left(\hat{h}\right)\notin  \mathbb{H}_{\mu,K}}$. This proves our claim.
\end{ex}

\pagebreak[3]
\begin{rem}\label{rem4}
 Using the expression of $\T_{q}$, we find that for all $k\in \Z$, and all $\mu\in \Q_{>0}$, $$\T_{q^{1/\mu}}\left(q^{k/\mu}z\right)=q^{\frac{k(k-1)}{2\mu}}z^{k}\T_{q^{1/\mu}}(z).$$ 
Following the definition of the $q$-Laplace transformation, we obtain that, for all ${(\mu,K)\in \Q_{>0}\times \N^{*}}$ with $K\in \mu\,\N^{*}$ and $\l\in \C^{*}/q^{K^{-1}\Z}$,  
$$\mathcal{L}_{\mu,K}^{[\l]}\left(1\right)=1.$$
Using additionally Remark \ref{rem1}, it follows that if $\ell\in \N$, then for all $(\mu,K)\in \Q_{>0}\times \N^{*}$ with $K\in \mu\,\N^{*}$ and $\l\in \C^{*}/q^{K^{-1}\Z}$, the function $\hat{\mathcal{B}}_{\mu}\left(z^{\ell}\right)$ belongs to $\mathbb{H}_{\mu,K}^{[\l]}$ and
$$\mathcal{L}_{\mu,K}^{[\l]}\circ\hat{\mathcal{B}}_{\mu}\left(z^{\ell}\right)=z^{\ell}.$$
More generally, if $f$ belongs to $\C\{z\}$, then for all $(\mu,K)\in \Q_{>0}\times \N^{*}$ with $K\in \mu\,\N^{*}$ and ${\l\in \C^{*}/q^{K^{-1}\Z}}$, the function
$\hat{\mathcal{B}}_{\mu}\left(f\right)$ belongs to $\mathbb{H}_{\mu,K}^{[\l]}$ and
$$\mathcal{L}_{\mu,K}^{[\l]}\circ\hat{\mathcal{B}}_{\mu}\left(f\right)=f.$$
\end{rem}

We finish the subsection by describing an asymptotic property of the $q$-Laplace transformation. The following definition is inspirited by \cite{Z02}, Definition~1.3.2.

\pagebreak[3]
\begin{defi}
Let us consider$(\mu,K)\in \Q_{>0}\times \N^{*}$ with $K\in \mu\,\N^{*}$, $\l\in \C^{*}/q^{K^{-1}\Z}$, $f\in\mathcal{M}(\C^{*},0)$ and ${\hat{f}:=\displaystyle\sum_{i\in \N}\hat{f}_{i}z^{i}\in \C[[z]]}$. We say that $$f\displaystyle\sim^{[\l]}_{\mu,K}\hat{f},$$ if for all $\e,R>0$ sufficiently small, there exist $L,M>0$, such that for all $k\in \N$ and for all 
$z$ in $$\Big\{z\in \C^{*}\Big| |z|<R\Big\}\setminus \displaystyle\bigcup_{\ell\in K^{-1}\Z}\Big\{z\in \C^{*}\Big| \left|z+ q^{\ell}\l\right|<\e\left| q^{\ell}\l \right|\Big\},$$
we have 
$$\Bigg|f(z)-\displaystyle\sum_{i=0}^{k-1}\hat{f}_{i}z^{i} \Bigg|<LM^{k}|q|^{\frac{k(k-1)}{2\mu}}|z|^{k}.$$
\end{defi}
For $\mu\in \Q_{>0}$, we define the formal $q$-Laplace transformation of order $\mu$ as follows:
$$
\begin{array}{llll}
\hat{\mathcal{L}}_{\mu}:&\C[[\z]]&\longrightarrow&\C[[z]]\\
&\displaystyle\sum_{\ell\in \N} a_{\ell}\z^{\ell}&\longmapsto&\displaystyle\sum_{\ell\in \N} a_{\ell}q^{\frac{\ell(\ell-1)}{2\mu}} z^{\ell}.
\end{array}
$$
Using Remark \ref{rem4}, for all $\ell\in \N$, $K\in \mu\,\N^{*}\bigcap \N^{*}$, and  $\l\in \C^{*}/q^{K^{-1}\Z}$, we obtain $$\widehat{\mathcal{L}}_{\mu}\left(\z^{\ell}\right)=\mathcal{L}_{\mu,K}^{[\l]}\left(\z^{\ell}\right).$$
\pagebreak[3]
\begin{propo}\label{propo1}
Let us consider $(\mu,K)\in \Q_{>0}\times \N^{*}$ with $K\in \mu\,\N^{*}$, $\l\in \C^{*}/q^{K^{-1}\Z}$, $f\in\mathcal{M}(\C^{*},0)$ and ${\hat{f}:=\displaystyle\sum_{i\in \N}\hat{f}_{i}\z^{i}\in \C[[\z]]}$ such that $f\displaystyle\sim^{[\l]}_{\mu,K}\hat{f}$. Let $(\mu_{1},K_{1})\in \Q_{>0}\times \N^{*}$ with $K_{1}\in \mu_{1}\,\N^{*}$ and $K/K_{1}\in \N^{*}$, let us choose an identification of $\l$, as an element of $\C^{*}/q^{K_{1}^{-1}\Z}$ and assume that $f\in  \mathbb{H}^{[\l]}_{\mu_{1},K_{1}}$. Then, we have
$$\mathcal{L}_{\mu_{1},K_{1}}^{[\l]}\left( f\right)\displaystyle\sim^{[\l]}_{\mu_{2},K_{1}}\widehat{\mathcal{L}}_{\mu_{1}}\left(\hat{f}\right), $$
where 
$$\mu_{2}^{-1}:=\mu^{-1}+\mu_{1}^{-1}.$$
\end{propo}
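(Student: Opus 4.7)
The plan is to reduce the statement to an explicit bound on the Laplace sum and then perform a saddle-point-style estimate. By linearity of $\mathcal{L}_{\mu_{1},K_{1}}^{[\l]}$ and the identity $\mathcal{L}_{\mu_{1},K_{1}}^{[\l]}(\z^{i})=\widehat{\mathcal{L}}_{\mu_{1}}(\z^{i})=q^{i(i-1)/(2\mu_{1})}z^{i}$ recorded in Remark \ref{rem4}, one obtains, for every $k\in \N$,
$$\mathcal{L}_{\mu_{1},K_{1}}^{[\l]}(f)(z)-\sum_{i=0}^{k-1}q^{\frac{i(i-1)}{2\mu_{1}}}\hat{f}_{i}\,z^{i}=\mathcal{L}_{\mu_{1},K_{1}}^{[\l]}(g_{k})(z),\qquad g_{k}:=f-\sum_{i=0}^{k-1}\hat{f}_{i}\z^{i}.$$
Since $K/K_{1}\in \N^{*}$, one has $K^{-1}\Z\subset K_{1}^{-1}\Z$, so any $z$ in the sector relevant for $\sim_{\mu_{2},K_{1}}^{[\l]}$ also lies in the sector of $\sim_{\mu,K}^{[\l]}$, and each point $q^{\ell}\l$ (with $\ell\in K_{1}^{-1}\Z$) is outside the forbidden spiral $-q^{K^{-1}\Z}\l$. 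The asymptotic hypothesis on $f$ is therefore at our disposal both at $z$ and at every $q^{\ell}\l$ close to $0$, and it only remains to show $|\mathcal{L}_{\mu_{1},K_{1}}^{[\l]}(g_{k})(z)|\leq LM^{k}|q|^{k(k-1)/(2\mu_{2})}|z|^{k}$ on the required domain.

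I would split the Laplace sum
$$\mathcal{L}_{\mu_{1},K_{1}}^{[\l]}(g_{k})(z)=\frac{\mu_{1}}{K_{1}}\sum_{\ell\in K_{1}^{-1}\Z}\frac{g_{k}(q^{\ell}\l)}{\T_{q^{1/\mu_{1}}}\bigl(q^{1/\mu_{1}+\ell}\l/z\bigr)}$$
into three pieces according to the size of $|q^{\ell}\l|$: a small-argument tail ($\ell\ll 0$, where the asymptotic bound on $g_{k}$ applies), a finite bounded-$\ell$ piece, and a large-argument tail ($\ell\gg 0$, where only the $q^{1/\mu_{1}}$-theta growth of $f$ furnished by the hypothesis $f\in \mathbb{H}_{\mu_{1},K_{1}}^{[\l]}$ is available). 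The denominator is bounded below uniformly via Lemma \ref{lem2},
$$\bigl|\T_{q^{1/\mu_{1}}}\bigl(q^{1/\mu_{1}+\ell}\l/z\bigr)\bigr|\geq C\,\T_{|q|^{1/\mu_{1}}}\bigl(|q|^{1/\mu_{1}+\ell}|\l|/|z|\bigr),$$
and the scaling identity $\T_{|q|^{1/\mu_{1}}}(|q|^{j/\mu_{1}}x)=|q|^{j(j-1)/(2\mu_{1})}x^{j}\T_{|q|^{1/\mu_{1}}}(x)$ for $j\in\Z$ reduces every evaluation of the theta to a fixed compact annulus plus an explicit power of $|q|$.

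The core of the proof is the small-argument tail. Combining $|g_{k}(q^{\ell}\l)|\leq L_{0}M_{0}^{k}|q|^{k(k-1)/(2\mu)}|q^{\ell}\l|^{k}$ with the lower bound on the theta and the above scaling reduces each summand to $|q|^{\varphi_{k}(\ell)}$ times bounded factors, where $\varphi_{k}$ is a concave quadratic polynomial in $\ell$. A direct optimization gives $\max_{\ell}\varphi_{k}(\ell)=k(k-1)/(2\mu_{1})+k\log_{|q|}|z|+O(1)$, and the remaining geometric tails on either side of the saddle sum to an $O(1)$ constant. Multiplying by the prefactor $|q|^{k(k-1)/(2\mu)}$ and invoking the harmonic identity $\mu_{2}^{-1}=\mu^{-1}+\mu_{1}^{-1}$ produces exactly the bound $|q|^{k(k-1)/(2\mu_{2})}|z|^{k}$. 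The large-argument tail is handled analogously: the growth $|f(q^{\ell}\l)|\leq L\,\T_{|q|^{1/\mu_{1}}}(M|q^{\ell}\l|)$ is, by design of $\mathbb{H}_{\mu_{1},K_{1}}^{[\l]}$, precisely the rate at which the ratio against the denominator $\T_{q^{1/\mu_{1}}}(q^{1/\mu_{1}+\ell}\l/z)$ produces a geometric decay in $\ell$ (for bounded $|z|$), and the polynomial part of $g_{k}$ is absorbed by the same estimate. The main obstacle is the bookkeeping in this small-argument computation: one must track carefully how the two theta scales ($|q|^{1/\mu}$ coming from the asymptotic hypothesis through $|q^{\ell}\l|^{k}$, and $|q|^{1/\mu_{1}}$ from the Laplace denominator) interact in the quadratic $\varphi_{k}$, so that the saddle-point exponent matches $k(k-1)/(2\mu_{1})$ and the harmonic law $\mu_{2}^{-1}=\mu^{-1}+\mu_{1}^{-1}$ emerges.
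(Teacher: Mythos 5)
Your reduction is the same as the paper's: both use Remark \ref{rem4} to rewrite the difference as $\mathcal{L}_{\mu_{1},K_{1}}^{[\l]}\bigl(f-\hat{f}^{(k)}\bigr)$, apply the hypothesis $f\sim^{[\l]}_{\mu,K}\hat{f}$ to the numerators, and use Lemma \ref{lem2} to control the denominators. Where you diverge is in evaluating the resulting sum $\sum_{\ell\in K_{1}^{-1}\Z}\bigl|q^{\ell k}\l^{k}/\T_{q^{1/\mu_{1}}}(q^{1/\mu_{1}+\ell}\l/z)\bigr|$. The paper does not split or optimize: it groups $\ell$ into the $K_{1}/\mu_{1}$ residue classes modulo $\mu_{1}^{-1}\Z$ and recognizes each class, exactly, as a theta series $\T_{|q|^{1/\mu_{1}}}\bigl(|q^{k/\mu_{1}}z/(q^{1+j/K_{1}}\l)|\bigr)$; the functional equation $\T_{|q|^{1/\mu_{1}}}(|q|^{k/\mu_{1}}x)=|q|^{k(k-1)/(2\mu_{1})}x^{k}\T_{|q|^{1/\mu_{1}}}(x)$ then extracts the factor $|q|^{k(k-1)/(2\mu_{1})}|z|^{k}$ on the nose, and the leftover ratio of thetas is bounded by the $\sq$-invariance/compactness argument of Lemma \ref{lem2}. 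Your saddle-point optimization of the concave quadratic $\varphi_{k}$ computes the same dominant term, but the identity-based route is what makes the exponent come out exactly and dispenses with all tail bookkeeping. (Minor point: $K/K_{1}\in\N^{*}$ gives $K_{1}^{-1}\Z\subset K^{-1}\Z$, not the reverse as you wrote; what is actually used is only that the evaluation points $q^{\ell}\l$, $\ell\in K_{1}^{-1}\Z$, avoid the forbidden spiral $-q^{K^{-1}\Z}\l$, which holds for $\e$ small.)

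The one place your route leaves a genuine hole is the large-argument tail. It is not enough that the ratio of thetas decays geometrically in $\ell$ for $|z|$ small: summing a geometric tail yields a constant, whereas the required bound $LM^{k}|q|^{k(k-1)/(2\mu_{2})}|z|^{k}$ vanishes like $|z|^{k}$ as $z\to 0$, so the tail must itself be shown to be $O(|z|^{k})$ with constants of the form $LM^{k}|q|^{k(k-1)/(2\mu_{2})}$. Extracting $|z|^{k}$ by keeping only the $k$-th term of the denominator theta destroys convergence of the sum in $\ell$ (the numerator grows like $|q|^{\mu_{1}\ell^{2}/2}$), so one must keep the full theta and redo the quadratic optimization on the tail as well — at which point you have essentially reconstructed the paper's exact resummation. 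This is fixable, but as written it is asserted rather than proved, and it is precisely the step the paper's theta identity renders automatic.
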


\begin{proof}
Let us fix $\e,R>0$ sufficiently small. Due to $f\displaystyle\sim^{[\l]}_{\mu,K}\hat{f}$, there exist $L,M>0$, such that for all $k\in \N$ and for all $\z$ in
$$\Big\{\z\in \C^{*}\Big| |\z|<R\Big\}\setminus \displaystyle\bigcup_{\ell\in K^{-1}\Z}\Big\{\z\in \C^{*}\Big| \left|\z+q^{\ell}\l \right|<\e\left| q^{\ell}\l \right|\Big\}:=\G,$$
we have 
\begin{equation}\label{eq8}
\Bigg|f(\z)-\displaystyle\sum_{i=0}^{k-1}\hat{f}_{i}\z^{i} \Bigg|<LM^{k}|q|^{\frac{k(k-1)}{2\mu}}|\z|^{k}.
\end{equation}
Let 
$\G_{1}:=\Big\{z\in \C^{*}\Big| |z|<R\Big\}\setminus \displaystyle\bigcup_{\ell\in K_{1}^{-1}\Z}\Big\{z\in \C^{*}\Big| \left|z+q^{\ell}\l \right|<\e\left| q^{\ell}\l \right| \Big\}$.
We want to prove the existence of $C\in \R_{>0}$ such that for all $z\in  \G_{1}$, for all $k\in \N$, we have 
$$\left|\mathcal{L}_{\mu_{1},K_{1}}^{[\l]}\left(f\right)(z)-\widehat{\mathcal{L}}_{\mu_{1}}\left(\hat{f}^{(k)}\right)(z)\right|\leq CM^{k}|q|^{\frac{k(k-1)}{2\mu_{2}}}|z|^{k}.$$

Let us fix $k\in \N$ and let~$\hat{f}^{(k)}:=\displaystyle\sum_{i=0}^{k-1}\hat{f}_{i}\z^{i}$. In virtue of Remark \ref{rem4}, we find that, for all~$z\in \G_{1}$, we have the equality 
$$ \left|\mathcal{L}_{\mu_{1},K_{1}}^{[\l]}\left(f\right)(z)-\widehat{\mathcal{L}}_{\mu_{1}}\left(\hat{f}^{(k)}\right)(z)\right|=\left|\mathcal{L}_{\mu_{1},K_{1}}^{[\l]}\left(f-\hat{f}^{(k)}\right)(z)\right|.$$  
Since $K/K_{1}\in \N^{*}$, we have $\G_{1}\subset \G$. Then, the inequality (\ref{eq8}) can be used in the following computation:

$$\begin{array}{cl}
&\left|\mathcal{L}_{\mu_{1},K_{1}}^{[\l]}\left(f-\hat{f}^{(k)}\right)(z)\right|\\\\
\leq&\dfrac{\mu_{1}}{K_{1}}
\displaystyle \sum_{\ell\in K_{1}^{-1}\Z}\left|\frac{f(q^{\ell}\l)-\displaystyle\sum_{i=0}^{k-1}\hat{f}_{i}q^{i\ell}\l^{i}}
{\T_{q^{1/\mu_{1}}}\left(\frac{q^{\frac{1}{\mu_{1}}+\ell}\l}{z}\right)}\right|
\\\\

\leq&LM^{k}|q|^{\frac{k(k-1)}{2\mu}}\dfrac{\mu_{1}}{K_{1}}
\displaystyle \sum_{\ell\in K_{1}^{-1}\Z}\left|\frac{q^{\ell k}\l^{k}}
{\T_{q^{1/\mu_{1}}}\left(\frac{q^{\frac{1}{\mu_{1}}+\ell}\l}{z}\right)}\right|.\end{array}$$
By straightforward computations, we find that the latter quantity is equal to

$$\begin{array}{cl}
&LM^{k}|q|^{\frac{k(k-1)}{2\mu}}\dfrac{\mu_{1}}{K_{1}}\displaystyle \sum_{j=1}^{K_{1}/\mu_{1}}
\left|\T_{q^{1/\mu_{1}}}\left(\frac{q^{\frac{j}{K_{1}}}\l}{z}\right)\right|^{-1}\displaystyle \sum_{\ell\in \Z}\left|\dfrac{q^{\frac{jk}{K_{1}}}q^{\frac{\ell k}{\mu_{1}}}\l^{k}z^{\ell}}
{q^{\frac{ k}{\mu_{1}}}q^{\frac{j\ell}{K_{1}}}q^{\frac{\ell(\ell+1)}{2\mu_{1}}}\l^{\ell}}\right|\\\\

\leq&LM^{k}|q|^{\frac{k(k-1)}{2\mu}}\dfrac{\mu_{1}}{K_{1}}\displaystyle \sum_{j=1}^{K_{1}/\mu_{1}}\left|\T_{q^{1/\mu_{1}}}\left(\frac{z}{q^{1+\frac{j}{K_{1}}}\l}\right)\right|^{-1}
\left|\frac{q^{\frac{jk}{K_{1}}}\l^{k}}{q^{\frac{k}{\mu_{1}}}}\T_{|q|^{1/\mu_{1}}}\left(\left|\dfrac{q^{\frac{k}{\mu_{1}}}z}{q^{1+\frac{j}{K_{1}}}\l}\right|\right)\right|\\\\

\leq&LM^{k}|q|^{\frac{k(k-1)}{2\mu}}|q|^{\frac{k(k-1)}{2\mu_{1}}}|z|^{k}\dfrac{\mu_{1}}{K_{1}}\displaystyle \sum_{j=1}^{K_{1}/\mu_{1}}\left|\dfrac{q^{\frac{j}{K_{1}}}\T_{|q|^{1/\mu_{1}}}\left(\left|\frac{z}{q^{1+j/K_{1}}\l}\right|\right)}{q^{\frac{k}{\mu_{1}}}\T_{q^{1/\mu_{1}}}\left(\frac{z}{q^{1+j/K_{1}}\l}\right)}\right|.
\end{array}$$
For $j\in \{1,\dots,K_{1}/\mu_{1}\}$, let us define $f_{j}(z):=\left|\dfrac{q^{\frac{j}{K_{1}}}\T_{|q|^{1/\mu_{1}}}\left(\left|\frac{z}{q^{1+j/K_{1}}\l}\right|\right)}{q^{\frac{k}{\mu_{1}}}\T_{q^{1/\mu_{1}}}\left(\frac{z}{q^{1+j/K_{1}}\l}\right)}\right|$ which is invariant under the action of $\sq$ and is continuous on $\G_{1}$. With the same reasoning as in the proof of Lemma~\ref{lem2}, we establish the existence of $C_{0}>0$, such that for all $z\in \G_{1}$, and for all $j\in \{1,\dots,K_{1}/\mu_{1}\}$, 
$$f_{j}(z)<C_{0}.$$
Hence, 
$$\begin{array}{cl}
&LM^{k}|q|^{\frac{k(k-1)}{2\mu}}|q|^{\frac{k(k-1)}{2\mu_{1}}}|z|^{k}\dfrac{\mu_{1}}{K_{1}}\displaystyle \sum_{j=1}^{K_{1}/\mu_{1}} f_{j}\\\\
\leq&C_{0}LM^{k}|q|^{\frac{k(k-1)}{2\mu}}|q|^{\frac{k(k-1)}{2\mu_{1}}}|z|^{k}=C_{0}LM^{k}|q|^{\frac{k(k-1)}{2\mu_{2}}}|z|^{k}.
\end{array}$$
This completes the proof.
\end{proof}

\pagebreak[3]
\subsection{Main result.}\label{sec12}

The goal of this subsection is to prove that if a linear $q$-difference equation with rational coefficients admits a formal power series $\hat{h}$ as a solution, then we may apply to $\hat{h}$ several $q$-Borel and $q$-Laplace transformations of convenient orders and convenient direction in order to obtain a solution of the same equation that belongs to~$\mathcal{M}(\C^{*})$. 
First, we introduce some notations.\\ \par
 For $R\in \{\C[z],\C(z)\}$ and $n\in \N^{*}$, we define $\mathcal{D}_{R,n}$ as the ring of $q$-difference operators of the form:
$$\displaystyle\displaystyle \sum_{i=l}^{m} b_{i}\sq^{i/n},$$
where $b_{i}\in R$, $l,m\in \Z$, $l\leq m$, and $\sq^{i/n}:=\sigma_{q^{i/n}}$. Let $\mathcal{D}_{R,\infty}:=\displaystyle \bigcup_{n\in \N^{*}}\mathcal{D}_{R,n}$. To simplify the notations, we will write $\mathcal{D}_{R}$ instead of $\mathcal{D}_{R,1}$.\\ \par
Let $P\in \mathcal{D}_{R,\infty}$. 
The Newton polygon of $P$ is the convex hull of the $$\displaystyle\bigcup_{i=l}^{m}\Big\{ (i,j)\in \Q\times\Z \Big| j\geq v_{0}(b_{i})\Big\},$$ where $v_{0}$ denotes the $z$-adic valuation. Let $(d_{1},n_{1}),\dots ,(d_{r+1},n_{r+1})$ with ${d_{1}<\dots<d_{r+1}}$, be a minimal subset of~$\Q\times\Z$ for the inclusion, such that the lower part of the boundary of the Newton polygon of $P$ is the convex hull of $(d_{1},n_{1}),\dots ,(d_{r+1},n_{r+1})$. If $r>0$, we call slopes of $P$ the rational numbers $\frac{n_{i+1}-n_{i}}{d_{i+1}-d_{i}}$, and multiplicity of the slope $\frac{n_{i+1}-n_{i}}{d_{i+1}-d_{i}}$ the integer $d_{i+1}-d_{i}$. We call positive slopes of $P$, the set of slopes that belongs to $\Q_{> 0}$. If $r=0$, we make the convention that the $q$-difference equation has an unique slope equals to $0$.\\\par 

Let $\hat{h}\in \C[[z]]\setminus\{0\}$ be a solution of a linear $q$-difference equation in coefficients in $\C(z)$. Let $\C((z))$ be the fraction field of  $\C[[z]]$ and let $m+1\in \N^{*}$ be the dimension of the $\C(z)$-vectorial subspace of $\C((z))$, spanned by $1$ and $\left\{\sq^{i}\left(\hat{h}\right),i\in \N\right\}$. Let ${P=a_{m}\sq^{m}+a_{m-1}\sq^{m-1}+\dots+a_{0}\in \mathcal{D}_{\C[z]}}$ and $a\in \C[z]$ with $\gcd(a,a_{0},\dots,a_{m})=1$ and ${P\left(\hat{h}\right)=a}$. 
Until the end of the subsection, we assume that $P$ has slopes strictly bigger than~$0$. Let~${\mu_{1}<\dots<\mu_{r}}$ be the positive slopes of the equation and set $\mu_{r+1}:=+\infty$. Let~$(\kappa_{1},\dots,\kappa_{r})$ be defined as:
$$\kappa_{i}^{-1}:=\mu_{i}^{-1}-\mu_{i+1}^{-1}.$$
Let $K\in \N^{*}$ (resp. $n\in \N^{*}$) be minimal, such that for all $i\in \{1,\dots,r\}$, $\frac{K}{\k_{i}}\in \N^{*}$ (resp. such that $\frac{n}{\k_{r}}\in \N^{*}$). Note that $\frac{K}{n}\in \N^{*}$, and therefore  $\mathbb{H}_{\k_{r},K}\subset \mathbb{H}_{\k_{r},n}$. In what follows, we are going to identify the elements of $\C^{*}/q^{n^{-1}\Z}$ as elements of $\C^{*}/q^{K^{-1}\Z}$. We now state the main result of the paper.
\pagebreak[3]
\begin{theo}\label{theo1}
 There exists a finite set $\Sigma\subset \C^{*}/q^{n^{-1}\Z}$, such that if $\l\in \left(\C^{*}/q^{n^{-1}\Z}\right)\setminus\Sigma$, then $$\mathcal{\hat{B}}_{\k_{1}}\circ \dots\circ\mathcal{\hat{B}}_{\k_{r}}\left(\hat{h}\right)\in \mathbb{H}_{\k_{1},K}^{[\l]}$$
and 
for $i=1$ (resp. $i=2,\dots,i=r-1$),
$\mathcal{L}_{\k_{i},K}^{[\l]}\circ\dots\circ \mathcal{L}_{\k_{1},K}^{[\l]}\circ\mathcal{\hat{B}}_{\k_{1}}\circ \dots\circ\mathcal{\hat{B}}_{\k_{r}}\left(\hat{h}\right)\in \mathbb{H}_{\k_{i+1},K}^{[\l]}$. Moreover, the following function
$$S_{q}^{[\l]}\left(\hat{h}\right):=\mathcal{L}_{\k_{r},n}^{[\l]}\circ\mathcal{L}_{\k_{r-1},K}^{[\l]}\circ\dots\circ \mathcal{L}_{\k_{1},K}^{[\l]}\circ\mathcal{\hat{B}}_{\k_{1}}\circ \dots\circ\mathcal{\hat{B}}_{\k_{r}}\left(\hat{h}\right)\in \mathcal{M}(\C^{*}),$$
 is solution of
$$P\left(\hat{h}\right)=P\left(S_{q}^{[\l]}\left(\hat{h}\right)\right)=a\in \C[z].$$
\end{theo}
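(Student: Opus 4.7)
The plan is to carry out the classical multi-summation procedure, adapted to the $q$-difference setting. The key numerical observation is the telescoping sum $\sum_{i=1}^r \kappa_i^{-1} = \mu_1^{-1} - \mu_{r+1}^{-1} = \mu_1^{-1}$: the total ``Borel order'' of the composition $\hat{\mathcal{B}}_{\kappa_1} \circ \dots \circ \hat{\mathcal{B}}_{\kappa_r}$ matches the inverse of the smallest positive slope of $P$, which governs the $q$-Gevrey order of $\hat{h}$.

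First I would show, by a standard bootstrap on $P(\hat{h}) = a$ that reads off growth from the smallest positive slope of the Newton polygon, that $\hat{h}$ is $q$-Gevrey of order $1/\mu_1$: the coefficients $h_\ell$ satisfy $|h_\ell| \leq C M^\ell |q|^{\ell^2/(2\mu_1)}$ for suitable $C, M > 0$. Combined with the telescoping identity, this implies directly that $\hat{g}_1 := \hat{\mathcal{B}}_{\kappa_1} \circ \dots \circ \hat{\mathcal{B}}_{\kappa_r}(\hat{h})$ belongs to $\C\{\zeta\}$. Iterating Lemma \ref{lem1} through the Borel transforms yields a $q$-difference equation $P_1(\hat{g}_1) = b_1$ with $P_1 \in \mathcal{D}_{\C[z], \infty}$; a Newton polygon computation (each $\hat{\mathcal{B}}_\mu$ sends a positive slope $s$ to $(s^{-1} + \mu^{-1})^{-1}$) determines the slopes of $P_1$. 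The crucial feature is that this equation lets me analytically continue $\hat{g}_1$ along each $q^{1/K}$-spiral $\lambda q^{K^{-1}\Z}$ avoiding a finite set $\Sigma \subset \C^{*}/q^{n^{-1}\Z}$ of ``bad'' directions, while delivering the $q^{1/\kappa_1}$-exponential growth required for $\hat{g}_1 \in \mathbb{H}_{\kappa_1, K}^{[\lambda]}$.

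For the induction, supposing $\hat{g}_i := \mathcal{L}_{\kappa_{i-1}, K}^{[\lambda]} \circ \dots \circ \mathcal{L}_{\kappa_1, K}^{[\lambda]} \circ \hat{\mathcal{B}}_{\kappa_1} \circ \dots \circ \hat{\mathcal{B}}_{\kappa_r}(\hat{h})$ has been shown to lie in $\mathbb{H}_{\kappa_i, K}^{[\lambda]}$, I would use Lemma \ref{lem1} to derive a transformed $q$-difference equation satisfied by $\hat{g}_i$ and apply Proposition \ref{propo1} together with this equation to verify that $\hat{g}_{i+1} := \mathcal{L}_{\kappa_i, K}^{[\lambda]}(\hat{g}_i)$ belongs to $\mathbb{H}_{\kappa_{i+1}, K}^{[\lambda]}$. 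The last Laplace $\mathcal{L}_{\kappa_r, n}^{[\lambda]}$ then delivers $S_{q}^{[\lambda]}(\hat{h}) \in \mathcal{M}(\C^{*}, 0)$; Remark \ref{rem1} iterated through each Borel--Laplace pair shows $P(S_{q}^{[\lambda]}(\hat{h})) = a$, and the rationality of the coefficients of $P$ propagates the local meromorphy to all of $\C^{*}$ through the action of $\sq$.

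The main obstacle is the verification of the membership $\hat{g}_i \in \mathbb{H}_{\kappa_i, K}^{[\lambda]}$ at each stage: mere analytic continuation via the intermediate equation is routine, but the sharp $\T_{|q|^{1/\kappa_i}}$-type growth requires a careful bound on the resolvent of the successively transformed operators along the $q^{1/K}$-spirals, exploiting the specific slope structure of the Newton polygon fixed by the choice of the $\kappa_j$. Identifying the exceptional set $\Sigma$ also demands bookkeeping: it must simultaneously avoid the finitely many spiral classes meeting zeros of the leading and trailing coefficients of a companion form of each intermediate operator, giving a finite union which remains finite in $\C^{*}/q^{n^{-1}\Z}$.
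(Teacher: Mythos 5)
You have the right architecture --- iterate the $q$-Borel transforms to kill the positive slopes one by one, transport the equation through each step with Lemma \ref{lem1} and Remark \ref{rem1}, re-sum with the $q$-Laplace transforms, and let the functional equation propagate meromorphy from a germ at $0$ to all of $\C^{*}$ --- and this matches the paper's strategy. Two points, however, keep this from being a proof. First, a sign error that is not cosmetic: $\hat{\mathcal{B}}_{\mu}$ sends a positive slope $s$ to $\left(s^{-1}-\mu^{-1}\right)^{-1}$, not $\left(s^{-1}+\mu^{-1}\right)^{-1}$; this is immediate from $\hat{\mathcal{B}}_{\mu}\left(z^{j}\sq^{i}\hat{f}\right)=q^{-j(j-1)/(2\mu)}\z^{j}\sq^{i-j/\mu}\,\hat{\mathcal{B}}_{\mu}\left(\hat{f}\right)$, which moves the Newton polygon point $(i,j)$ to $(i-j/\mu,\,j)$. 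With your formula the top slope after $\hat{\mathcal{B}}_{\kappa_{r}}=\hat{\mathcal{B}}_{\mu_{r}}$ would be $\mu_{r}/2$ rather than $0$, the new top slope would never equal the next $\kappa_{i}$, and the induction would not close.

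Second, and more seriously, the step you label ``the main obstacle'' is the actual content of the theorem, and your plan does not supply it. Membership of $\hat{g}_{i+1}$ in $\mathbb{H}_{\kappa_{i+1},K}^{[\l]}$ requires analytic continuation along the spiral $\l q^{K^{-1}\Z}$ \emph{with} $\T_{|q|^{1/\kappa_{i+1}}}$-growth at infinity; Proposition \ref{propo1} cannot deliver this, since it only controls the Laplace transform for $|z|$ small and away from the pole spiral. What is needed --- and what the paper proves as Proposition \ref{propo2}(2)--(3) --- is a quantitative statement about each intermediate operator $Q=\sum b_{i}\sq^{i}$: the Newton polygon gives $\deg\left(b_{i}/b_{m_{2}}\right)\leq(m_{2}-i)\mu$, so after dividing by $b_{m_{2}}$ and normalizing by $\T_{|q|^{1/K}}(M_{0}|\z|)^{\mu/K}$ for a suitably chosen $M_{0}$ the non-leading coefficients become uniformly small along the spiral, and the functional equation itself then bounds the normalized solution; the excluded set $\Sigma$ is finite because it comes from the zeros of $b_{m_{2}}$. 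Your ``careful bound on the resolvent'' is exactly this estimate, and until it is written down the argument has a hole at every inductive step. The one place where you genuinely diverge from the paper --- deducing convergence of $\hat{\mathcal{B}}_{\kappa_{1}}\circ\cdots\circ\hat{\mathcal{B}}_{\kappa_{r}}\left(\hat{h}\right)$ from a $q$-Gevrey estimate of order $1/\mu_{1}$ on $\hat{h}$ rather than from the transformed equation having maximal slope $0$ --- is legitimate if you import the $q$-analogue of the Maillet--Ramis theorem, but it buys nothing toward the growth estimates above.
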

\pagebreak[3]
\begin{rem}\label{rem2}
For $|z|$ close to~$0$, $S_{q}^{[\l]}\left(\hat{h}\right)$ has poles of order at most $1$ that are contained in the $q^{1/n}$-spiral $-q^{n^{-1}\Z}\l$.\end{rem}

A similar result was proved by Marotte and Zhang in \cite{MZ}. They have used the analytic factorization of $P$ to deduce a summation theorem. We are going to prove the theorem by a purely algebraical argument based on (\ref{eq2}). To illustrate the strategy of the proof, we treat explicitly the following example.
\pagebreak[3]
\begin{ex}
Let $\hat{h}\in \C[[z]]$ be a solution of $P\left(\hat{h}\right)=1$ with
$$P:=z^{4}\sq^{4}+z\sq^{2}+\sq,$$
and assume that the dimension of the $\C(z)$-vectorial subspace of $\C((z))$, spanned by $1$ and $\left\{\sq^{i}\left(\hat{h}\right),i\in \N\right\}$ is $4$.
The slopes are~$1$ and $3/2$. 
With (\ref{eq2}), we find $Q\left(\hat{\mathcal{B}}_{3/2}\left(\hat{h}\right)\right)=1$, with 
$$Q:=(q^{-4}\z^{4}+\z)\sq^{\frac{4}{3}}+\sq,$$
which has slope $3$. We again use (\ref{eq2}) to find 
$R\left(\hat{\mathcal{B}}_{3}\circ\hat{\mathcal{B}}_{3/2}\left(\hat{h}\right)\right)=1$, with 
$$R:=(\z+1)\sq+q^{-6}\z^{4}\sq^{0},$$
which has slope $-4$. 
 In particular, $\hat{\mathcal{B}}_{3}\circ\hat{\mathcal{B}}_{3/2}(\hat{h})\in \C\{\z\}$. The $q$-difference equation satisfied by this latter function implies that if $\l\in \left(\C^{*}/q^{3^{-1}\Z}\right)\setminus\{ -1\}$, then $\hat{\mathcal{B}}_{3}\circ\hat{\mathcal{B}}_{3/2}(\hat{h})\in\mathbb{H}_{3,3}^{[\l]}$. With Remark~\ref{rem1}, we obtain that for such a $\l$ :
$$Q\left(\mathcal{L}_{3,3}^{[\l]}\circ\hat{\mathcal{B}}_{3}\circ\hat{\mathcal{B}}_{3/2}(\hat{h})\right)=1. $$
Using the same reasoning, we deduce that if $$\l\in \left(\C^{*}/q^{3^{-1}\Z}\right)\setminus\left\{ -1,e^{\frac{i\pi}{3}},e^{\frac{-i\pi}{3}}\right\},$$ then $\mathcal{L}_{3,3}^{[\l]}\circ\hat{\mathcal{B}}_{3}\circ\hat{\mathcal{B}}_{3/2} \left(\hat{h}\right)\in\mathbb{H}_{3/2,3}^{[\l]}$, and 
$$P\left(S_{q}^{[\l]}\left(\hat{h}\right)\right)=1,$$
where $$S_{q}^{[\l]}\left(\hat{h}\right):=\mathcal{L}_{3/2,3}^{[\l]}\circ\mathcal{L}_{3,3}^{[\l]}\circ\hat{\mathcal{B}}_{3}\circ\hat{\mathcal{B}}_{3/2}\left(\hat{h}\right).$$
$$\begin{tabular}{ccc}
\includegraphics[width=0.287\linewidth]{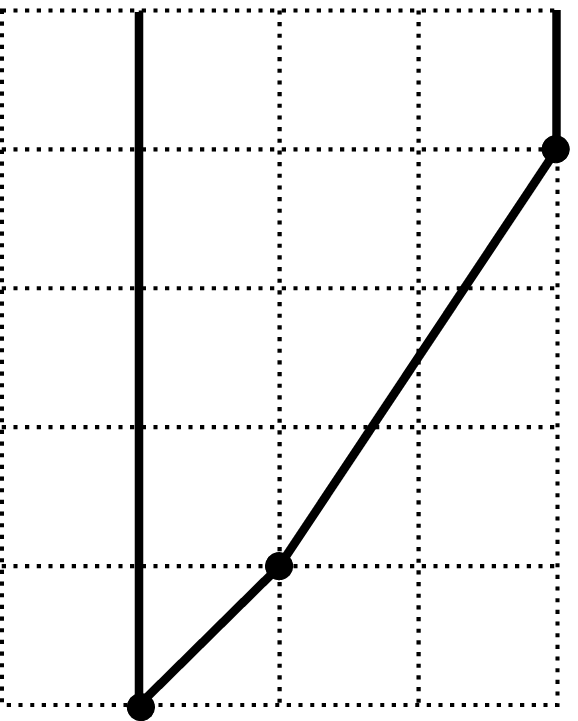}
&
\includegraphics[width=0.283\linewidth]{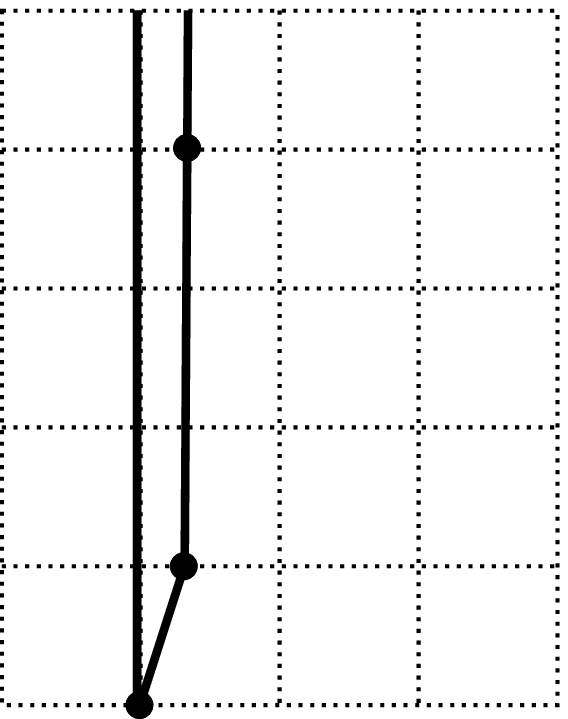}
&
\includegraphics[width=0.283\linewidth]{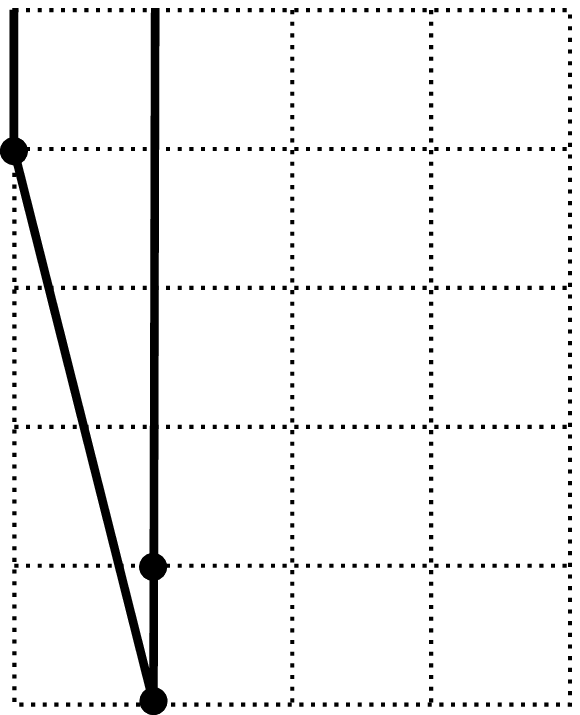}
 \\
\\
Newton polygon of  $P$& Newton polygon of $Q$&Newton polygon of $R$
\end{tabular}$$

\end{ex}

Before proving the theorem, we need to state and prove a proposition.

\pagebreak[3]
\begin{propo}\label{propo2}
Let $K\in \N^{*}$, $\hat{h}\in\C[[z]]$, and $P\in\mathcal{D}_{\C[z],K}$ with maximal slopes strictly bigger than~$0$, such that $P\left(\hat{h}\right)=a\in \C[z]$. Let ${\mu_{1}<\dots<\mu_{r}}$, be the positive slopes of $P$, let $\mu\geq \mu_{r}$ be a rational number, and let us assume that $\frac{K}{\mu}\in \N^{*}$. The following statement hold. 
\begin{trivlist}
\item (1)
There exists $Q:=\displaystyle\sum_{\substack{i=m_{1}\\i\in K^{-1}\Z}}^{m_{2}} b_{i}(\z)\sq^{i}\in\mathcal{D}_{\C[\z],K}$, such that the series $\hat{\mathcal{B}}_{\mu}\left(\hat{h}\right)$ satisfies $Q\left(\hat{\mathcal{B}}_{\mu}\left(\hat{h}\right)\right)=\hat{\mathcal{B}}_{\mu}\left(a\right)$. Moreover:
\begin{itemize}
\item The positive slopes of $Q$ are equal to $\left(\mu_{1}^{-1}-\mu^{-1}\right)^{-1},\dots,\left(\mu_{r}^{-1}-\mu^{-1}\right)^{-1},$ if  $\mu> \mu_{r},$
\item The positive slopes of $Q$ are equal to $\left(\mu_{1}^{-1}-\mu^{-1}\right)^{-1},\dots,\left(\mu_{r-1}^{-1}-\mu^{-1}\right)^{-1}$, if   ${\mu= \mu_{r}, r>1,}$
\item $Q$ has maximal slope equal to $0$, if $\mu= \mu_{r}, r=1$,
\end{itemize}

\item (2) For all $i\in [m_{1},m_{2}]\cap K^{-1}\Z$, we have \begin{equation}\label{eq4}
\deg \left(\frac{b_{i}}{b_{m_{2}}}\right)\leq (m_{2}-i)\mu.
\end{equation}
\item (3)
 Any solution of $Q\left(\hat{\mathcal{B}}_{\mu}\left(\hat{h}\right)\right)=\hat{\mathcal{B}}_{\mu}\left(a\right)$ that belongs to $\mathcal{M}(\C^{*},0)$ belongs also to $\mathbb{H}_{\mu,K}$.
\end{trivlist}
\end{propo}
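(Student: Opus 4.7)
The plan is to construct $Q$ explicitly via formula (2.2), read off its Newton polygon from the shearing this construction induces on the polygon of $P$, and then exploit the resulting $q$-difference equation both for the degree bound in (2) and for the analytic continuation and growth bound needed in (3). To produce $Q$, I would write $P = \sum c_{i,j} z^j \sq^i$ with $(i,j) \in (K^{-1}\Z) \times \N$, apply $\hat{\mathcal{B}}_\mu$ to both sides of $P(\hat{h}) = a$, and use (2.2) term by term to obtain
\[
Q = \sum_{i,j} c_{i,j}\, q^{-j(j-1)/(2\mu)}\, \zeta^j\, \sq^{\,i - j/\mu}.
\]
The hypothesis $K/\mu \in \N^*$ guarantees $i - j/\mu \in K^{-1}\Z$, so $Q \in \mathcal{D}_{\C[\zeta], K}$. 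Geometrically, $Q$ is obtained from $P$ by the shearing $(i,j) \mapsto (i - j/\mu, j)$ on the support, and hence on the Newton polygon. A direct computation shows that an edge of slope $\mu_k$ in the lower boundary of $P$ is sent to an edge of slope $(\mu_k^{-1} - \mu^{-1})^{-1}$: if $\mu > \mu_r$ every positive slope remains finite and positive, yielding the first case; if $\mu = \mu_r$ the slope-$\mu_r$ edge becomes vertical and disappears from the list of finite positive slopes, giving the second case when $r > 1$ and the third when $r = 1$.

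For (2), set $j_0 := \deg b_{m_2}$ and consider the linear form $\phi(i,j) := i - j/\mu$. By construction $m_2 = \max_{(i,j) \in \mathrm{supp}(P)} \phi(i,j)$ and $j_0$ is the largest $j$-coordinate among the maximisers of $\phi$. Since all positive slopes $\mu_k$ of $P$ satisfy $\mu_k \leq \mu$, $\phi$ is nondecreasing as one traverses the lower boundary of the Newton polygon from left to right (strictly increasing on edges of slope $< \mu$, constant on any edge of slope $= \mu$). It is therefore maximised at the top-right vertex $(d_{r+1}, n_{r+1}) = (m, v_0(b_m))$, and one checks that this forces $(m_2, j_0) = (m - v_0(b_m)/\mu,\, v_0(b_m))$, whence $m_2 + j_0/\mu = m$. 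The inequality (2.4) is then equivalent to $i' \leq m$ for every $(i', j) \in \mathrm{supp}(P)$, which is just the definition of $m$.

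For (3), let $g \in \mathcal{M}(\C^*, 0)$ satisfy $Q(g) = \hat{\mathcal{B}}_\mu(a)$. I would rewrite the equation as
\[
g(q^{m_2}\zeta) = \frac{1}{b_{m_2}(\zeta)} \left[ \hat{\mathcal{B}}_\mu(a)(\zeta) - \sum_{i < m_2} b_i(\zeta)\, g(q^i \zeta) \right],
\]
together with the analogous inversion solved for $g(q^{m_1}\zeta)$. Let $\Sigma \subset \C^*/q^{K^{-1}\Z}$ be the finite set consisting of the classes of the zeros of $b_{m_1}$ and $b_{m_2}$; for $\lambda \notin \Sigma$ the spiral $\lambda q^{K^{-1}\Z}$ avoids those zeros, and for $\e$ small enough the tubular neighborhood $\Omega = \bigcup_\ell \{|z - \lambda q^\ell| < \e|\lambda q^\ell|\}$ lies in the locus where both $b_{m_1}$ and $b_{m_2}$ are bounded away from $0$. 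The two recursions then propagate $g$ analytically from a punctured neighborhood of $0$ to the whole of $\Omega$. For the growth estimate, (2) yields $|b_i(\zeta)/b_{m_2}(\zeta)| \leq C|\zeta|^{(m_2 - i)\mu}$ for $|\zeta|$ large on $\Omega$, and the functional equation of $\T_{|q|^{1/\mu}}(M|\zeta|)$ absorbs precisely a factor of this order under $\sq$; an induction along the spiral then produces $L, M > 0$ with $|g(\zeta)| \leq L|\T_{|q|^{1/\mu}}(M|\zeta|)|$ throughout $\Omega$. The main obstacle is this last comparison: $M$ must be calibrated so that the $q$-difference estimate closes on itself for every index $i \in [m_1, m_2] \cap K^{-1}\Z$, and maintaining uniform control on the tubular neighborhood $\Omega$ (rather than just on the discrete spiral) is precisely where the sharp degree bound from (2) becomes indispensable.
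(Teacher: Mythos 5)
Your construction of $Q$, the shearing $(i,j)\mapsto (i-j/\mu,\,j)$ of the Newton polygon, and the slope computation in (1) are exactly the paper's argument, and your derivation of (2) via the linear form $\phi(i,j)=i-j/\mu$ together with the identity $\mu m_{2}+j_{0}=\mu m$ is a slightly cleaner packaging of what the paper simply reads off from its formula (\ref{eq1}). One small point worth recording there: two distinct support points of $P$ contributing to the same coefficient $b_{i}$ of $Q$ necessarily carry distinct powers of $\z$, so no cancellation can lower $\deg b_{i}$ or $\deg b_{m_{2}}$ below the combinatorial value; this is implicitly needed both for $\deg b_{m_{2}}=j_{0}$ and for the slope statements in (1).

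The one step that would stall as written is the growth estimate in (3), and you flagged it yourself. The recursion advances in steps of $q^{1/K}$, but $\T_{|q|^{1/\mu}}(M|\z|)$ has an exact functional equation only under multiplication of the argument by $|q|^{1/\mu}$, and $1/K$ is a \emph{fractional} multiple of $1/\mu$ (since $K/\mu\in\N^{*}$); so the claim that ``the functional equation absorbs precisely a factor of this order'' is not available for the shift $\sq^{1/K}$ that actually occurs, and the induction does not close with this comparison function. The paper's device is to normalize instead by $\T_{|q|^{1/K}}(M|\z|)^{\mu/K}$, which does transform exactly under $\sq^{1/K}$ (picking up the factor $(M|\z|)^{\mu/K}$). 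The resulting operator $R_{M}$ then has coefficients satisfying $c_{i,M}=M^{(i-m_{2})\mu}c_{i,1}$ with each $c_{i,1}$ bounded at infinity precisely because of (2); one chooses $M_{0}$ so that $\sum_{i<m_{2}}|c_{i,M_{0}}|\leq 1/2$ for $|\z|$ large, notes that the right-hand side of the normalized equation tends to $0$, runs your induction outward along the spiral, and only at the very end compares $\T_{|q|^{1/K}}(M_{0}|\z|)^{\mu/K}$ with $\T_{|q|^{1/\mu}}(|M_{0}\z|)$, whose ratio is bounded by their respective functional equations. With that substitution your argument for (3) goes through; everything else in your proposal matches the paper.
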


\begin{proof}
\begin{trivlist}
\item (1)
 We compute explicitly a linear $q$-difference equation satisfied by $\hat{\mathcal{B}}_{\mu}\left(\hat{h}\right)$ using Lemma \ref{lem1}. Let~$(s_{1},t_{1}),\dots ,(s_{k},t_{k})$  with ${s_{1}<\dots<s_{k}}$, be a minimal subset of~$\Q\times \Z$ for the inclusion, such that~$(s_{1},t_{1}),\dots ,(s_{k},t_{k})$ is the lower part of the boundary of the Newton polygon of~$P$. Since the Newton polygon has $r$ positive slopes, it follows that ${t_{k-r}<\dots<t_{k}}$. Let us write $$P=:\displaystyle \sum_{\substack{i=s_{1}\\i\in K^{-1}\Z}}^{s_{k}}\sum_{j=0}^{k_{i}} a_{i,j}z^{j}\sq^{i},$$
with $a_{i,j}\in \C$ and $k_{i}\in \N$. 
Using  (\ref{eq2}), we find the existence of $Q\in\mathcal{D}_{\C[\z],K}$, such that ${Q\left(\hat{\mathcal{B}}_{\mu}\left(\hat{h}\right)\right)=\hat{\mathcal{B}}_{\mu}\left(a\right)}$, where
\begin{equation}\label{eq1}
Q:=\displaystyle \sum_{\substack{i=s_{1}\\i\in K^{-1}\Z}}^{s_{k}}\sum_{j=0}^{k_{i}} \dfrac{a_{i,j}\z^{j}\sq^{i-j/\mu}}{q^{\frac{j(j-1)}{2\mu}}}.\end{equation}
Consequently,  
\begin{itemize}
\item $Q$ has positive slopes equal to $$\dfrac{t_{k+1-r}-t_{k-r}}{s_{k+1-r}-s_{k-r}-\frac{t_{k+1-r}-t_{k-r}}{\mu}},\dots,\dfrac{t_{k}-t_{k-1}}{s_{k}-s_{k-1}-\frac{t_{k}-t_{k-1}}{\mu}}, \hbox{ if }\mu>\mu_{r},$$
\item $Q$  has positive slopes equal to
$$\dfrac{t_{k+1-r}-t_{k-r}}{s_{k+1-r}-s_{k-r}-\frac{t_{k+1-r}-t_{k-r}}{\mu}},\dots,\dfrac{t_{k-1}-t_{k-2}}{s_{k-1}-s_{k-2}-\frac{t_{k-1}-t_{k-2}}{\mu}} \hbox{ if } \mu= \mu_{r}, r>1,$$
\item $Q$ has maximal slope equal to $0$, if $\mu=\mu_{r}, r=1$.
\end{itemize}
Those latter equal to 
$$\left(\mu_{1}^{-1}-\mu^{-1}\right)^{-1},\dots,\left(\mu_{r}^{-1}-\mu^{-1}\right)^{-1}, \hbox{ if } \mu>\mu_{r}$$ 
resp. 
$$\left(\mu_{1}^{-1}-\mu^{-1}\right)^{-1},\dots,\left(\mu_{r-1}^{-1}-\mu^{-1}\right)^{-1}, \hbox{ if } \mu=\mu_{r},r>1.$$
\item (2)
Let us write $Q:=\displaystyle\sum_{\substack{i=m_{1}\\i\in K^{-1}\Z}}^{m_{2}} b_{i}(\z)\sq^{i}$. Following the proof of (1), we find $m_{2}:=s_{k}-\frac{t_{k}}{\mu}$. Using (\ref{eq1}), we find that the $\z$-degree of~$b_{m_{2}}$ is $t_{k}$, and the $\z$-degree of the other $b_{i}$ are bounded by $t_{k}+(m_{2}-i)\mu$. Hence, for all $i\in [m_{1},m_{2}]\cap K^{-1}\Z$, 
$$
\deg \left(\frac{b_{i}}{b_{m_{2}}}\right)\leq (m_{2}-i)\mu.
$$
This proves (2).
\item (3)
Assume now the existence of $f \in \mathcal{M}(\C^{*},0)$ solution of $Q\left(f\right)=0$. Let us prove the existence of a finite set $\Sigma\subset \C^{*}/q^{K^{-1}\Z}$, such that for all $\l\in \left(\C^{*}/q^{K^{-1}\Z}\right)\setminus \Sigma$, there exist $M_{0}\in \C^{*}$, $L,\e>0$, such that $f$ admits an analytic continuation on 
$$\displaystyle\bigcup_{\ell\in K^{-1}\Z}\Big\{\z\in \C^{*}\Big| \left|\z-\l q^{\ell}\right|<\e\left| q^{\ell}\l \right|\Big\},$$ 
that satisfies 
\begin{equation}\label{eq3}
|f(\z)|<L\left|\T_{|q|^{1/K}}(M_{0}|\z|)\right|^{\mu/K}.
\end{equation}
Let $M\in \C^{*}$. The function, $f(\z)\T_{|q|^{1/K}}(M|\z|)^{-\mu/K}$ satisfies a linear $q$-difference equation 
\begin{equation}\label{eq6}
R_{M}\left(f(\z)\T_{|q|^{1/K}}(M|\z|)^{-\mu/K}\right)=\frac{\hat{\mathcal{B}}_{\mu}\left(a\right)}{b_{m_{2}}(\z)M^{m_{2}\mu}\left|\z\right|^{m_{2}\mu}\T_{|q|^{1/K}}(M|\z|)^{\mu/K}}
\end{equation}
with 
$$R_{M}:=\displaystyle \sum_{\substack{i=m_{1}\\i\in K^{-1}\Z}}^{m_{2}} \frac{b_{i}(\z)M^{i\mu}\left|\z\right|^{i\mu}}
{b_{m_{2}}(\z)M^{m_{2}\mu}\left|\z\right|^{m_{2}\mu}}\sq^{i}=:
\displaystyle \sum_{\substack{i=m_{1}\\i\in K^{-1}\Z}}^{m_{2}} c_{i,M}(\z)\sq^{i}.$$
Due to (\ref{eq4}), for all $M\in \C^{*}$, the $c_{i,M}(\z)$ are bounded for $|\z|$ big. Note that  for all $M\in \C^{*}$, for all $i\in K^{-1}\Z$, we have:
$$c_{i,M}=M^{(i-m_{2})\mu}c_{i,1}.$$
Consequently, there exists $M_{0}\in \C^{*}$ such that $\left|\displaystyle\sum_{\substack{i=m_{1}\\i\in K^{-1}\Z}}^{m_{2}-1/K} c_{i,M_{0}}(\z)\right|$ is bounded by $\frac{1}{2K(m_{2}-m_{1})}$ for $|\z|$ big. The $q$-difference equation satisfied by the right-hand side of (\ref{eq6}) implies that it tends to~$0$ as $|\z|$ tends to infinity.
We recall that by construction, $c_{m_{2},M_{0}}=1$. From~(\ref{eq6}) and the previous facts, we obtain that for all $\z\in \C^{*}$ with $\z q^{K^{-1}\Z}$ does not intersect the poles of the $c_{i,M_{0}}$, there exists a constant $L>0$, such that we have for all $\ell\in \Z$:
$$\left|f\left(\z q^{\ell/K}\right)\T_{|q|^{1/K}}\left(M_{0}\left|\z q^{\ell/K}\right|\right)^{-\mu/K}\right|<L.$$
In particular, this yields (\ref{eq3}).
The $q$-difference equations satisfied by $\T_{|q|^{1/K}}(M_{0}|\z|)^{\mu/K}$ and $\T_{|q|^{1/\mu}}(|M_{0}\z|)$ imply that the function $\left|\frac{\T_{|q|^{1/K}}\big(M_{0}|\z|\big)^{\mu/K}}{\T_{|q|^{1/\mu}}\big(|M_{0}\z|\big)}\right|$ is bounded. Hence
$$f\in\mathbb{H}_{\mu,K}.$$
\end{trivlist}
\end{proof}

\begin{proof}[Proof of Theorem \ref{theo1}]
Applying successively Proposition \ref{propo2} $r$ times,
$\mathcal{\hat{B}}_{\k_{1}}\circ \dots\circ\mathcal{\hat{B}}_{\k_{r}}\left(\hat{h}\right)$ has maximal slope equals to $0$ and therefore it converges. Proposition \ref{propo2} yields that it belongs also to $\mathbb{H}_{\k_{1},K}$. Let $\l\in \C^{*}/q^{K^{-1}\Z}$
such that the convergent series belongs to $\mathbb{H}^{[\l]}_{\k_{1},K}$.
Because of Remark~\ref{rem1}, for $i=1$, the function
$$\mathcal{L}_{\k_{i},K}^{[\l]}\circ\dots\circ \mathcal{L}_{\k_{1},K}^{[\l]}\circ\mathcal{\hat{B}}_{\k_{1}}\circ \dots\circ\mathcal{\hat{B}}_{\k_{r}}\left(\hat{h}\right)\in \mathcal{M}(\C^{*},0)$$  satisfies the same linear $q$-difference equation as the formal power series $\mathcal{\hat{B}}_{\k_{i+1}}\circ \dots\circ\mathcal{\hat{B}}_{\k_{r}}\left(\hat{h}\right)$. Using additionally Proposition \ref{propo2}, we obtain that for $i=1$, 
$$\mathcal{L}_{\k_{i},K}^{[\l]}\circ\dots\circ \mathcal{L}_{\k_{1},K}^{[\l]}\circ\mathcal{\hat{B}}_{\k_{1}}\circ \dots\circ\mathcal{\hat{B}}_{\k_{r}}\left(\hat{h}\right)\in \mathbb{H}_{\k_{i+1},K}.$$ 
We apply successively the same reasoning for ${i=2,\dots,i=r-1}$. We obtain the existence of $\Sigma'$, a finite subset of $\C^{*}/q^{K^{-1}\Z}$, such that for all ${\l\in \left(\C^{*}/q^{K^{-1}\Z}\right)\setminus \Sigma'}$, the following composition of functions makes sense
$$\mathcal{L}_{\k_{r-1},K}^{[\l]}\circ\dots\circ \mathcal{L}_{\k_{1},K}^{[\l]}\circ\mathcal{\hat{B}}_{\k_{1}}\circ \dots\circ\mathcal{\hat{B}}_{\k_{r}}\left(\hat{h}\right)\in  \mathbb{H}_{\k_{r},K}.$$
We recall that $\frac{K}{n}\in \N^{*}$, which implies in particular that $\mathbb{H}_{\k_{r},K}\subset \mathbb{H}_{\k_{r},n}$. Therefore, there exists $\Sigma$, a finite subset of $\C^{*}/q^{n^{-1}\Z}$, such that for all ${\l\in \left(\C^{*}/q^{n^{-1}\Z}\right)\setminus \Sigma}$, the following composition of functions makes sense
$$S_{q}^{[\l]}\left(\hat{h}\right):=\mathcal{L}_{\k_{r},n}^{[\l]}\circ\mathcal{L}_{\k_{r-1},K}^{[\l]}\circ\dots\circ \mathcal{L}_{\k_{1},K}^{[\l]}\circ\mathcal{\hat{B}}_{\k_{1}}\circ \dots\circ\mathcal{\hat{B}}_{\k_{r}}\left(\hat{h}\right)\in \mathcal{M}(\C^{*},0).$$
Using again Remark~\ref{rem1}, we deduce that $S_{q}^{[\l]}\left(\hat{h}\right)$ satisfies the same linear $q$-difference equation as~$\hat{h}$. Since $S_{q}^{[\l]}\left(\hat{h}\right)\in \mathcal{M}(\C^{*},0)$ is solution of a linear $q$-difference equation with rational coefficients, we obtain that $S_{q}^{[\l]}\left(\hat{h}\right)$ belongs to $ \mathcal{M}(\C^{*})$. This concludes the proof.
\end{proof}

Using Proposition \ref{propo1}, we find that the meromorphic solution of Theorem \ref{theo1} is asymptotic to $\hat{h}$.
\pagebreak[3]
\begin{propo}\label{propo3}
Let us keep the same notations as in Theorem \ref{theo1}. For all ${\l\in \left(\C^{*}/q^{n^{-1}\Z}\right)\setminus \Sigma}$, we have
$$S_{q}^{[\l]}\left(\hat{h}\right)\displaystyle\sim^{[\l]}_{\mu_{r},n}\hat{h}.$$
\end{propo}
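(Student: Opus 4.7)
The plan is to iterate Proposition~\ref{propo1} along the chain of $r$ Laplace transforms that appears in the definition of $S^{[\l]}_q(\hat{h})$. The starting observation, extracted from the proof of Theorem~\ref{theo1}, is that the iterated $q$-Borel transform
$$\hat{g} := \hat{\mathcal{B}}_{\k_1}\circ\dots\circ\hat{\mathcal{B}}_{\k_r}(\hat{h})$$
is actually a convergent power series, since the successive applications of Proposition~\ref{propo2} reduce the maximal slope of the Newton polygon to $0$. A convergent series satisfies the asymptotic relation $\hat{g}\sim^{[\l]}_{\mu,K}\hat{g}$ for arbitrarily large $\mu\in\Q_{>0}$, because the Gevrey factor $|q|^{k(k-1)/(2\mu)}\geq 1$ is absorbed by the convergence estimate. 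This supplies the base case on which the induction rests.

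From there I would apply Proposition~\ref{propo1} $r$ times in sequence, once for each $\mathcal{L}^{[\l]}_{\k_i,K}$, and finally for $\mathcal{L}^{[\l]}_{\k_r,n}$ (the latter being handled via the inclusion $\mathbb{H}_{\k_r,K}\subset\mathbb{H}_{\k_r,n}$ that follows from $K/n\in\N^*$). The membership hypotheses needed at each step---namely that the intermediate function belongs to $\mathbb{H}^{[\l]}_{\k_{i+1},K}$ (resp.\ $\mathbb{H}^{[\l]}_{\k_r,n}$ at the final step)---are precisely those already verified in the proof of Theorem~\ref{theo1}, so no new work is required there.

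The formal expansion updates cleanly at each step thanks to the identity $\hat{\mathcal{L}}_\mu\circ\hat{\mathcal{B}}_\mu=\mathrm{id}$ on $\C[[z]]$, which is immediate from the monomial actions $\hat{\mathcal{B}}_\mu(z^\ell)=q^{-\ell(\ell-1)/(2\mu)}\z^\ell$ and $\hat{\mathcal{L}}_\mu(\z^\ell)=q^{\ell(\ell-1)/(2\mu)}z^\ell$. Hence after $i$ Laplaces the formal limit has collapsed to $\hat{\mathcal{B}}_{\k_{i+1}}\circ\dots\circ\hat{\mathcal{B}}_{\k_r}(\hat{h})$, and after all $r$ Laplaces the formal limit is exactly $\hat{h}$. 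Meanwhile the Gevrey indices compose additively by Proposition~\ref{propo1}: $(\mu^{(i)})^{-1}=(\mu^{(i-1)})^{-1}+\k_i^{-1}$. Picking $\mu^{(0)}$ as large as we please (permitted by convergence of $\hat{g}$), the telescoping identity $\k_i^{-1}=\mu_i^{-1}-\mu_{i+1}^{-1}$ with $\mu_{r+1}=+\infty$ evaluates the cumulative sum $(\mu^{(r)})^{-1}=\sum_{i=1}^{r}\k_i^{-1}$ and yields the Gevrey index announced in the statement.

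I do not expect a genuine obstacle: the bookkeeping of the moduli $K$ and $n$ across the chain is routine once Proposition~\ref{propo1} is in hand, and the base case is essentially tautological. The only point that requires mild care is the legitimacy of sending $\mu^{(0)}\to\infty$ at the first step, which is unproblematic because the convergence estimate for $\hat{g}$ is independent of any Gevrey parameter, so the constants produced by Proposition~\ref{propo1} stay under control in that limit.
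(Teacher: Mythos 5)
Your strategy is exactly the one the paper intends: the paper's entire proof is the single sentence preceding the proposition, which just invokes Proposition~\ref{propo1}, and your skeleton --- convergent base case $\hat{\mathcal{B}}_{\kappa_1}\circ\cdots\circ\hat{\mathcal{B}}_{\kappa_r}(\hat h)$, then $r$ applications of Proposition~\ref{propo1} with the membership hypotheses already supplied by Theorem~\ref{theo1}, and the formal expansions tracked via $\hat{\mathcal{L}}_\mu\circ\hat{\mathcal{B}}_\mu=\mathrm{id}$ --- is the right fleshing-out of that sentence.

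The gap is in your last step. The recursion $(\mu^{(i)})^{-1}=(\mu^{(i-1)})^{-1}+\kappa_i^{-1}$ with $(\mu^{(0)})^{-1}=0$ gives $(\mu^{(r)})^{-1}=\sum_{i=1}^r\kappa_i^{-1}=\sum_{i=1}^r\left(\mu_i^{-1}-\mu_{i+1}^{-1}\right)=\mu_1^{-1}$, since $\mu_{r+1}=+\infty$. The telescoping therefore produces the index $\mu_1$, not the index $\mu_r$ appearing in the statement, so your claim that the sum ``yields the Gevrey index announced'' is unjustified for $r\ge 2$: as $|q|>1$, the relation $\sim^{[\l]}_{\mu,K}$ gets strictly \emph{stronger} as $\mu$ increases, so $\sim^{[\l]}_{\mu_1,n}$ does not imply $\sim^{[\l]}_{\mu_r,n}$. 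Indeed $\sim^{[\l]}_{\mu_r,n}$ cannot hold in general: comparing the bounds for truncation orders $k$ and $k+1$ at a fixed small $|z|$ shows that $f\sim^{[\l]}_{\mu,K}\hat f$ forces $|\hat f_k|\le L'(M')^k|q|^{k(k+1)/(2\mu)}$, whereas a series whose smallest slope $\mu_1$ is genuinely active has coefficients of size $|q|^{k^2/(2\mu_1)}$. What your argument actually establishes is $S_q^{[\l]}(\hat h)\sim^{[\l]}_{\mu_1,n}\hat h$; the $\mu_r$ in the stated proposition agrees with this only when $r=1$ and is presumably a slip for $\mu_1$, but as a proof of the literal statement your write-up papers over this at the final line. (Your treatment of the base case is otherwise fine; to land exactly at level $\kappa_1$ after the first Laplace step, feed the convergent-series remainder bound $LM^k|z|^k$ directly into the proof of Proposition~\ref{propo1} rather than quantifying over finite $\mu^{(0)}$.)
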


\pagebreak[3]
\begin{rem}
Let us keep the same notations and let ${\l\in \left(\C^{*}/q^{n^{-1}\Z}\right)\setminus \Sigma}$. We wonder if the map $\hat{h}\mapsto S_{q}^{[\l]}\left(\hat{h}\right)$ has algebraic properties. We give here a partial answer. Let us consider $f\in \C(z)$ such that $f\hat{h}\in \C[[z]]$. Since the slopes of the linear $q$-difference equations satisfied by $\hat{h}$ and $f\hat{h}$ are the same, we obtain using Remark~\ref{rem1}, that there exists a finite set $\Sigma'\subset \C^{*}/q^{n^{-1}\Z}$, such that for all $\l\in \left(\C^{*}/q^{n^{-1}\Z}\right)\setminus\Sigma'$,  
$$S_{q}^{[\l]}\left(f\hat{h}\right)=fS_{q}^{[\l]}\left(\hat{h}\right).$$
\end{rem}

Let us now state a similar result for the system, which will be used in $\S \ref{sec2}$. Let us consider the vector ${\hat{Y}=\left(\hat{Y}_{j}\right)_{j\leq m}\in \Big(\C[[z]]\Big)^{m}}$ of formal power series, solution of ${\sq \hat{Y}= A\hat{Y}}$, with~${A\in \mathrm{GL}_{m}(\C(z))}$. Let~${\{\mu_{1},\dots,\mu_{r}\}\in (\Q_{>0})^{r}}$, be minimal in $r$ for the inclusion, such that each entries of the vector $\hat{Y}$ satisfy a linear $q$-difference equation in coefficients in~$\C[z]$ with positive slopes contained in $\{\mu_{1},\dots,\mu_{r}\}$. Without loss of generality, we may assume that $\mu_{1}<\dots<\mu_{r}$. Set $\mu_{r+1}:=+\infty$. Let~$(\kappa_{1},\dots,\kappa_{r})$ be defined as:
$$\kappa_{i}^{-1}:=\mu_{i}^{-1}-\mu_{i+1}^{-1}.$$
Let $K\in \N^{*}$ (resp. $n\in \N^{*}$) be minimal , such that for all $i\in \{1,\dots,r\}$, $\frac{K}{\k_{i}}\in \N^{*}$ (resp. such that $\frac{n}{\k_{r}}\in \N^{*}$). 

\pagebreak[3]
\begin{theo}\label{theo4} There exists a finite set $\Sigma\subset \C^{*}/q^{n^{-1}\Z}$, such that if $\l\in \left(\C^{*}/q^{n^{-1}\Z}\right)\setminus\Sigma$, then $$\left(\mathcal{\hat{B}}_{\k_{1}}\circ \dots\circ\mathcal{\hat{B}}_{\k_{r}}\left(\hat{Y}_{j}\right)\right)_{j\leq m}\in \left(\mathbb{H}_{\k_{1},K}^{[\l]}\right)^{m}$$

and 
for $i=1$ (resp. $i=2,\dots,i=r-1$),
$$\left(\mathcal{L}_{\k_{i},K}^{[\l]}\circ\dots\circ \mathcal{L}_{\k_{1},K}^{[\l]}\circ\mathcal{\hat{B}}_{\k_{1}}\circ \dots\circ\mathcal{\hat{B}}_{\k_{r}}\left(\hat{Y}_{j}\right)\right)_{j\leq m}\in \left(\mathbb{H}_{\k_{i+1},K}^{[\l]}\right)^{m}.$$ Moreover, the following vector
$$S_{q}^{[\l]}\left(\hat{Y}\right):=\left(\mathcal{L}_{\k_{r},n}^{[\l]}\circ\mathcal{L}_{\k_{r-1},K}^{[\l]}\circ\dots\circ \mathcal{L}_{\k_{1},K}^{[\l]}\circ\mathcal{\hat{B}}_{\k_{1}}\circ \dots\circ\mathcal{\hat{B}}_{\k_{r}}\left(\hat{Y}_{j}\right)\right)_{j\leq m}\in \Big(\mathcal{M}(\C^{*})\Big)^{m},$$
is solution of $\sq S_{q}^{[\l]}\left(\hat{Y}\right)=AS_{q}^{[\l]}\left(\hat{Y}\right),$
and satisfies
$S_{q}^{[\l]}\left(\hat{Y}\right)\displaystyle\sim^{[\l]}_{\mu_{r},n}\hat{Y}$.
\end{theo}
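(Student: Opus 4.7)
The plan is to reduce everything to the scalar Theorem~\ref{theo1} by applying the nested composition of $q$-Borel and $q$-Laplace transformations componentwise to $\hat{Y}$. By hypothesis, for each $j\in \{1,\dots,m\}$ there exist $P_{j}\in \mathcal{D}_{\C[z]}$ and $a_{j}\in \C[z]$ with $P_{j}(\hat{Y}_{j})=a_{j}$ and whose positive slopes lie in $\{\mu_{1},\dots,\mu_{r}\}$. Since the parameters $\k_{1},\dots,\k_{r}$, $K$ and $n$ are built from the global set of slopes, they dominate those required for each scalar $P_{j}$. When a slope $\mu_{i}$ is absent from $P_{j}$, the corresponding Borel--Laplace pair $\mathcal{L}^{[\l]}_{\k_{i},K}\circ\hat{\mathcal{B}}_{\k_{i}}$ still acts meaningfully: it reduces to the identity on convergent inputs by Remark~\ref{rem4}.

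The first main step is to run, for each fixed $j$, the argument of the proof of Theorem~\ref{theo1}. Iterating Proposition~\ref{propo2} $r$ times shows that $\hat{\mathcal{B}}_{\k_{1}}\circ\dots\circ\hat{\mathcal{B}}_{\k_{r}}(\hat{Y}_{j})$ satisfies a $q$-difference equation of maximal slope $0$, hence converges and belongs to $\mathbb{H}_{\k_{1},K}$ by Proposition~\ref{propo2}(3). One then applies the $q$-Laplace transformations one after the other, invoking Remark~\ref{rem1} at each intermediate step to preserve the equation and Proposition~\ref{propo2}(3) to enter the next $\mathbb{H}$-space. This produces a finite exceptional set $\Sigma_{j}\subset \C^{*}/q^{n^{-1}\Z}$ such that, for $\l\notin \Sigma_{j}$, every intermediate function lies in the appropriate $\mathbb{H}^{[\l]}_{\k_{i+1},K}$ and $S_{q}^{[\l]}(\hat{Y}_{j})\in \mathcal{M}(\C^{*})$. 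Setting $\Sigma:=\bigcup_{j}\Sigma_{j}$, which is still finite, covers the membership and meromorphicity assertions for the whole vector $\hat{Y}$; Proposition~\ref{propo3} applied componentwise then yields the asymptotic relation $S_{q}^{[\l]}(\hat{Y})\sim^{[\l]}_{\mu_{r},n}\hat{Y}$.

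It remains to establish the system identity $\sq S_{q}^{[\l]}(\hat{Y})=AS_{q}^{[\l]}(\hat{Y})$. I would clear denominators by choosing $b\in\C[z]\setminus\{0\}$ with $B:=bA\in M_{m}(\C[z])$, so that the system rewrites componentwise as $b\sq\hat{Y}_{j}=\sum_{k}B_{jk}\hat{Y}_{k}$ in $\C[[z]]$. Applying $S_{q}^{[\l]}$ to both sides and invoking three properties finishes the argument: $\C$-linearity, which is immediate; commutation with $\sq$, inherited from the commutation of each individual Borel and Laplace with $\sq^{1/K}$ in Lemma~\ref{lem1}; and commutation with multiplication by any $c\in \C[z]$, proved by induction on the nesting by combining the two displays of Remark~\ref{rem1} with the fact that each intermediate formal series satisfies a $q$-difference equation in $\mathcal{D}_{\C[z],K}$ (Proposition~\ref{propo2}). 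Dividing the resulting identity by $b\in \mathcal{M}(\C^{*})$ yields the desired system equation. The main obstacle I anticipate is this third property: since $\hat{\mathcal{B}}_{\mu}$ alone does not commute with $z$-multiplication, one must reorganize the composition as a sequence of paired Borel--Laplace operators acting on intermediate formal series with polynomial-coefficient equations, and then apply the algebraic-properties observation made in the remark following Proposition~\ref{propo3}, now carried uniformly across all components of the system.
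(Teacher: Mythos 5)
Your proposal is correct and follows essentially the same route as the paper: the paper's own proof is a two-line reduction to the proof of Theorem~\ref{theo1} applied componentwise, with Remark~\ref{rem4} invoked precisely to handle the coordinates whose iterated $q$-Borel transforms become convergent before all $r$ steps are exhausted --- exactly the degenerate case you identify. Your more detailed derivation of the system identity (clearing denominators, then using $\C$-linearity, commutation with $\sq$, and commutation with polynomial multiplication for the full nested composition) is a correct elaboration of what the paper leaves implicit in the phrase ``similar to the proof of Theorem~\ref{theo1}''.
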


\begin{proof}
The proof is similar to the proof of Theorem \ref{theo1}. The only major difference is that we may need additionally Remark \ref{rem4} to treat the coordinates $\hat{h}$ of $\hat{Y}$ that satisfy $\mathcal{\hat{B}}_{\k_{i}}\circ \dots\circ\mathcal{\hat{B}}_{\k_{r}}\left(\hat{h}\right)\in \C\{\z\}$, for $i\in \{ 2,\dots,r\}$.
\end{proof}

\pagebreak[3]
\section{Applications}\label{sec2}

We give two applications of Theorem \ref{theo4}. In $\S \ref{sec21}$, we obtain an explicit version of a theorem proved by Praagman: we show how we can compute a fundamental solution of a linear $q$-difference system in coefficients in $\C(z)$, of which entries are meromorphic on $\C^{*}$.
In $\S \ref{sec22}$, we explain how the solutions of Theorem \ref{theo4} are related to the solutions present in \cite{RSZ} in a particular case. \\\par

If the entries of the matrix ${\hat{H}=\left(\hat{H}_{i,j}\right)\in \mathrm{GL}_{m}\Big(\C[[z]]\Big)}$ are solutions of linear $q$-difference equations in coefficients in $\C(z)$, then for a convenient $n\in \N^{*}$ and convention choice of $\l\in \C^{*}/q^{n^{-1}\Z}$, we may apply the $q$-Borel and the $q$-Laplace transformations to every entries of $\hat{H}$ as in Theorem~\ref{theo4} and for such a $\l\in \C^{*}/q^{n^{-1}\Z}$, we write
$$S_{q}^{[\l]}\left(\hat{H}\right):=\left(S_{q}^{[\l]}\left(\hat{H}_{i,j}\right)\right) .$$
 
 \pagebreak[3]
\subsection{Computing a meromorphic fundamental solution of a $q$-difference system.}\label{sec21}
 The goal of this subsection is to show how to compute an invertible matrix solution of a linear $q$-difference system in coefficients in $\C(z)$, of which entries are meromorphic on $\C^{*}$.\\\par 
First, we introduce some notations. Note that the Theta function was already introduced but we recall the expression for the reader's convenience.
Let us consider the meromorphic functions on $\C^{*}$, $\T_{q}(z):=\displaystyle \sum_{\ell \in \Z} q^{\frac{-\ell(\ell+1)}{2}}z^{\ell}$, ${\ell_{q}(z):=\frac{z}{\T_{q}(z)}\frac{d}{dz}\T_{q}(z)}$ and  ${\L_{c}(z):=\frac{\T_{q}(z)}{\T_{q}(z/c)}}$, with $c\in \C^{*}$,  
 that satisfy the linear $q$-difference equations:\\
\begin{itemize}
\item $\sq \T_{q}=z\,\T_{q}$.\\
\item  $\sq \,\ell_{q}=\ell_{q} +1$.\\
\item $\sq \,\L_{c}=c\,\L_{c}.$\\
\end{itemize}
Let $C\in \mathrm{GL}_{m}(\C)$ and consider now the decomposition in Jordan normal form ${C=P(DN)P^{-1}}$ where $DN=ND$, $D=\mathrm{Diag}(d_{i})$ is diagonal, $N$ is a nilpotent upper triangular matrix and $P$ is an invertible matrix with complex coefficients. We construct the matrix
$$
\L_{C}:=P\left(\mathrm{Diag}(\L_{d_{i}})e^{\log(N)\ell_{q}}\right)P^{-1}\in \mathrm{GL}_{m}\Big(\C\Big( \ell_{q},\left(\L_{c}\right)_{c \in \C^{*}}\Big)\Big)$$
that satisfies
$$
\sq \L_{C}=C\L_{C}=\L_{C}C.$$
Remark that if $c\in \C^{*}$ and $(c)\in \mathrm{GL}_{1}(\C)$ is the corresponding matrix, then by construction, we have $\L_{(c)}=\L_{c}$.\\\par 
Let $n\in \Z,d\in \N^{*}$, with $\gcd(n,d)=1$, if $n\neq 0$ and $a\in \C^{*}$. We define the $d$ times $d$ diagonal matrix as follows:
$$E_{n,d,a}:=\mathrm{Diag}\Big(\T_{q^{d}}(az)^{n},\dots,\T_{q^{d}}(q^{d-1}az)^{n}\Big).$$
Let $\C((z))$ (resp. $\C(\{z\})$) be the fraction field of  $\C[[z]]$ (resp. $\C\{z\}$) and let ${A,B\in \mathrm{GL}_{m}\Big(\C((z))\Big)}$. The two $q$-difference systems, $\sq Y=AY$ and $\sq Y=BY$ are said to be formally (resp. analytically) equivalent, if there exists $P\in \mathrm{GL}_{m}\Big(\C((z))\Big)$ (resp. $P\in \mathrm{GL}_{m}\Big(\C(\{z\})\Big)$), called the gauge transformation, such that $$B=P[A]_{\displaystyle\sq}:=(\sq P)AP^{-1}.$$ In particular, $$\sq Y=AY\Longleftrightarrow\sq \left(PY\right)=BPY.$$ 
Conversely, if there exist $A,B,P\in \mathrm{GL}_{m}\Big(\C((z))\Big)$
such that
$\sq Y=AY$, $ \sq Z=BZ$ and $Z=PY$, then 
$$B=P\left[ A\right]_{\displaystyle\sq}.$$\par 
 For the proof of the following theorem, see \cite{BuT}, Theorem 1.18. See also \cite{VdPR}, Corollary 1.6.
\pagebreak[3]
\begin{theo}\label{theo2}
Let $B$ that belongs to $\mathrm{GL}_{m}\Big(\C(\{z\})\Big)$. There exist integers ${n_{1},\dots,n_{k}\in \Z}$ we will assume to be in increasing order, ${d_{1},\dots,d_{k},m_{1},\dots,m_{k}\in \N^{*}}$, with~$\gcd(n_{i},d_{i})=1$, ${\sum m_{i}d_{i}=m}$, and 
\begin{itemize}
\item $\hat{H}\in\mathrm{GL}_{m}\Big(\C[[z]]\Big)$,
\item $C_{i}\in \mathrm{GL}_{m_{i}}(\C)$,
\item $a_{i}\in \C^{*}$,
\end{itemize}
 such that $B=\hat{H}[A]_{\displaystyle\sq}$, where $A\in \mathrm{GL}_{m}\Big(\C(z)\Big)$ is defined by
$$\sq \Big(\mathrm{Diag}\Big(\L_{C_{i}}\otimes E_{n_{i},d_{i},a_{i}} \Big)\Big)=A\mathrm{Diag}\Big(\L_{C_{i}}\otimes E_{n_{i},d_{i},a_{i}} \Big),$$
and $\mathrm{Diag}\Big(\L_{C_{i}}\otimes E_{n_{i},d_{i},a_{i}}\Big):=\begin{pmatrix}
\L_{C_{1}}\otimes E_{n_{1},d_{1},a_{1}} &&\\
&\ddots&\\
&&\L_{C_{k}}\otimes E_{n_{k},d_{k},a_{k}} 
\end{pmatrix}$.
\end{theo}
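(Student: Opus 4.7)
The plan is to apply the $q$-analog of the Levelt--Turrittin classification established in \cite{VdPR, BuT}. I would view $\sq Y = BY$ as a $q$-difference module $M$ over $\C(\{z\})$, extend scalars to $\C((z))$ to obtain the formal module $\hat{M}$, and classify it up to formal isomorphism. The proof has three ingredients: a slope decomposition of $\hat{M}$, a classification of each pure isoclinic piece as $\L_{C_i} \otimes E_{n_i, d_i, a_i}$, and the verification that the resulting gauge transformation lies in $\mathrm{GL}_m(\C[[z]])$ rather than merely in $\mathrm{GL}_m(\C((z)))$.

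For the slope filtration, the Newton polygon of $B$ determines rational slopes $n_1/d_1 < \dots < n_k/d_k$ in lowest terms with multiplicities $m_i d_i$ summing to $m$, and over $\C((z))$ one has a canonical decomposition $\hat{M} = \bigoplus_i \hat{M}_i$ into pure isoclinic submodules of slope $n_i/d_i$. The splitting rests on the vanishing of $\mathrm{Ext}^1$ between pure isoclinic modules of distinct slopes, which follows from a Newton-polygon estimate together with a fixed-point argument for $\sq$ acting on morphism spaces. This step produces a formal gauge transformation putting $B$ into block-diagonal form, one block per slope.

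The main obstacle is the classification of each pure isoclinic block. For a block of slope $n/d$, I would pull back along the ramification $z \mapsto z^{1/d}$ (passing to $\sigma_{q^{1/d}}$), which converts the slope to the integer $n$; then twist by the rank-one module $\sq y = a z^n y$ to reduce to a regular singular module; finally apply the classification of regular singular $q$-difference modules, which are parametrized by a constant monodromy matrix $C \in \mathrm{GL}_m(\C)$ modulo $q^{\Z}$-conjugation and realized explicitly by $\L_C$. Undoing the twist and descending the ramification produces the tensor block $\L_{C_i} \otimes E_{n_i, d_i, a_i}$, the Theta factors in $E_{n_i, d_i, a_i}$ playing the role of the exponentials in the differential Turrittin normal form. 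The difficulty here is that in the $q$-setting there is no canonical lift of the splitting across the ramification, and one must make compatible choices of the $a_i$ and $C_i$ from the intrinsic lattice data of each $\hat{M}_i$; this is the technical heart of the argument in \cite{BuT}.

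Finally, to promote $\hat{H}$ from $\mathrm{GL}_m(\C((z)))$ to $\mathrm{GL}_m(\C[[z]])$, I would multiply on the right by a block diagonal Laurent monomial matrix $\mathrm{Diag}(z^{\nu_i} I_{m_i d_i})$ whose exponents equal the minimal $z$-adic valuations of the corresponding blocks of the formal gauge. The conjugation action on the normal form merely rescales each $a_i$ by $q^{\nu_i}$ (since $\sq z^{\nu_i} = q^{\nu_i} z^{\nu_i}$), preserving the stated shape of $A$, so that the adjusted $\hat{H}$ lies in $\mathrm{GL}_m(\C[[z]])$ as required.
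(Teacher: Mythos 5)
The paper does not actually prove this statement: it is imported verbatim from the literature (the text preceding it reads ``For the proof of the following theorem, see \cite{BuT}, Theorem 1.18. See also \cite{VdPR}, Corollary 1.6''), so there is no internal proof to compare against. Your outline follows exactly the route of those references --- formal slope decomposition via vanishing of $\mathrm{Ext}^{1}$ between pure isoclinic modules of distinct slopes, then classification of each pure block by ramification $z\mapsto z^{1/d}$, twisting by a rank-one object to reach the regular singular case, and realization by $\L_{C_{i}}\otimes E_{n_{i},d_{i},a_{i}}$ --- and as a roadmap it is accurate. Two of your three steps are, however, only pointers to the hard parts (you yourself defer the descent of the splitting across the ramification to \cite{BuT}), so this is a proof scheme rather than a proof.

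The one step you do carry out in detail is the one that does not work as stated. To promote $\hat{H}$ from $\mathrm{GL}_{m}\big(\C((z))\big)$ to $\mathrm{GL}_{m}\big(\C[[z]]\big)$ you right-multiply by a blockwise monomial matrix $\mathrm{Diag}(z^{\nu_{i}}I_{m_{i}d_{i}})$ with $\nu_{i}$ the minimal valuation of the corresponding block column. This clears denominators, so the entries land in $\C[[z]]$, but membership in $\mathrm{GL}_{m}\big(\C[[z]]\big)$ requires $\det\hat{H}$ to be a \emph{unit} of $\C[[z]]$, i.e.\ $\hat{H}(0)$ invertible, and a monomial rescaling does not achieve that: already for $\begin{pmatrix}1&z^{-1}\\0&1\end{pmatrix}$ the correction yields $\begin{pmatrix}1&1\\0&z\end{pmatrix}$, whose determinant vanishes at $0$. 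The integrality of $\hat{H}$ in the cited results comes from a different mechanism: the slope filtration already exists over $\C(\{z\})$, its formal splitting is realized by a \emph{unipotent} block-upper-triangular gauge whose off-diagonal blocks are computed recursively as genuine power series (this is precisely the $\hat{H}_{i,j}$ of \cite{RSZ}, \S 3.3.3, used later in the paper), and the diagonal blocks are handled by the analytic classification of pure modules; the product of a unipotent formal gauge and convergent block gauges has invertible constant term. You should either invoke that structure or replace the monomial trick by a Birkhoff--Smith factorization argument in which the discarded factor is shown to preserve the normal form of $A$.
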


Assume now that $B\in \mathrm{GL}_{m}\big(\C(z)\big)$. We want to construct a fundamental solution of the system~$\sq Y=BY$, of which entries are meromorphic on $\C^{*}$. First, remark that the entries of $\hat{H}$ satisfy linear $q$-difference equations in coefficients in~$\C(z)$ since
$$B=\sq \left(\hat{H}\right)A\hat{H}^{-1}.$$
\pagebreak[3]
\begin{lem}
There exist $n\in \N^{*}$ and a finite set $\widetilde{\Sigma}\subset \C^{*}/q^{n^{-1}\Z}$, such that for all ${\l\in \left(\C^{*}/q^{n^{-1}\Z}\right)\setminus \widetilde{\Sigma}}$, the invertible matrix ${S_{q}^{[\l]}\left(\hat{H}\right)\in \mathrm{GL}_{m}\big(\mathcal{M}(\C^{*})\big)}$, is solution of:
$$B=\sq \left(S_{q}^{[\l]}\left(\hat{H}\right)\right)AS_{q}^{[\l]}\left(\hat{H}\right)^{-1}.$$
\end{lem}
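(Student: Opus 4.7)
The plan is to reduce the statement to the matrix version of Theorem~\ref{theo4} by vectorizing the matrix identity satisfied by $\hat{H}$, and then to deduce invertibility of the summed matrix from the asymptotic property of Proposition~\ref{propo3}.

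Starting from $B=(\sq\hat{H})A\hat{H}^{-1}$, I would first rewrite this as $\sq\hat{H}=B\hat{H}A^{-1}$ and read it as a linear $q$-difference relation for the $m^{2}$ entries of $\hat{H}$. Arranging these entries column by column as a vector $\hat{Y}\in(\C[[z]])^{m^{2}}$, this becomes a linear system $\sq\hat{Y}=(A^{-\mathrm{T}}\otimes B)\hat{Y}$ of size $m^{2}$ with coefficients in $\C(z)$. Applying Theorem~\ref{theo4} to $\hat{Y}$ produces a common $n\in\N^{*}$ and a finite exceptional set $\widetilde{\Sigma}\subset\C^{*}/q^{n^{-1}\Z}$ such that, for every $\l\in(\C^{*}/q^{n^{-1}\Z})\setminus\widetilde{\Sigma}$, the entrywise sum $S_{q}^{[\l]}(\hat{H})$ has coefficients in $\mathcal{M}(\C^{*})$ and satisfies the same matrix equation, i.e.
$$\sq S_{q}^{[\l]}(\hat{H})=B\,S_{q}^{[\l]}(\hat{H})\,A^{-1},$$
together with the entrywise asymptotic $S_{q}^{[\l]}(\hat{H})_{i,j}\sim^{[\l]}_{\mu_{r},n}\hat{H}_{i,j}$ from Proposition~\ref{propo3}.

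The main content is then to verify that $S_{q}^{[\l]}(\hat{H})$ is invertible over $\mathcal{M}(\C^{*})$. Since $\hat{H}\in\mathrm{GL}_{m}(\C[[z]])$, the constant term $c:=\det\hat{H}(0)$ is nonzero. Specializing the asymptotic relation to $k=1$, for every $(i,j)$ there exist $L_{i,j},M_{i,j}>0$ with $\bigl|S_{q}^{[\l]}(\hat{H})_{i,j}(z)-\hat{H}_{i,j}(0)\bigr|<L_{i,j}M_{i,j}|z|$ on the sectorial domain $\{|z|<R\}\setminus\bigcup_{\ell\in n^{-1}\Z}\{|z+q^{\ell}\l|<\e|q^{\ell}\l|\}$ avoiding the potential poles. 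Since the determinant is a polynomial in the matrix entries, $\det S_{q}^{[\l]}(\hat{H})$ is a meromorphic function on $\C^{*}$ that tends to $c\neq 0$ as $z\to 0$ within that domain; in particular it is not identically zero, so $S_{q}^{[\l]}(\hat{H})\in\mathrm{GL}_{m}(\mathcal{M}(\C^{*}))$.

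Finally, right-multiplying the relation $\sq S_{q}^{[\l]}(\hat{H})=B\,S_{q}^{[\l]}(\hat{H})\,A^{-1}$ by $A\,S_{q}^{[\l]}(\hat{H})^{-1}$ yields the desired equality
$$B=\sq\!\bigl(S_{q}^{[\l]}(\hat{H})\bigr)\,A\,S_{q}^{[\l]}(\hat{H})^{-1}.$$
The only delicate point in this scheme is the invertibility step, which would fail without the strong asymptotic provided by Proposition~\ref{propo3}; once it is available, everything else is a bookkeeping combination of Theorem~\ref{theo4} and of the invertibility of $\hat{H}$ in $\mathrm{GL}_{m}(\C[[z]])$.
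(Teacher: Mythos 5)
Your proposal is correct and follows essentially the same route as the paper: apply Theorem \ref{theo4} to the entries of $\hat{H}$ (which satisfy scalar $q$-difference equations over $\C(z)$ because $B=\sq(\hat{H})A\hat{H}^{-1}$; your Kronecker-product vectorization just makes this explicit), then rule out $\det S_{q}^{[\l]}(\hat{H})\equiv 0$ by an asymptotic argument. The only difference is cosmetic and lies in the invertibility step: the paper includes $\det(\hat{H})$ among the series to be summed and invokes $\det\bigl(S_{q}^{[\l]}(\hat{H})\bigr)\sim^{[\l]}_{\mu,n}\det(\hat{H})\neq 0$, whereas you use only the order-zero asymptotics of the entries together with continuity of the determinant to get $\det S_{q}^{[\l]}(\hat{H})(z)\to\det\hat{H}(0)\neq 0$ as $z\to 0$ in the sectorial domain --- an equally valid and slightly more elementary variant.
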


\begin{proof}
Let $n\in \N^{*}$ and $\widetilde{\Sigma}\subset \C^{*}/q^{n^{-1}\Z}$ be such that for all ${\l\in \left(\C^{*}/q^{n^{-1}\Z}\right)\setminus \widetilde{\Sigma}}$, we may apply the $q$ Borel-Laplace summation to every entries of $\hat{H}$ and to $\det \left(\hat{H}\right)$.
  We apply Theorem~\ref{theo4}. We only have to prove that ${S_{q}^{[\l]}\left(\hat{H}\right)}$ is invertible. There exists $\mu\in \Q_{>0}$, such that 
$$\det \left(S_{q}^{[\l]}\left(\hat{H}\right)\right)\sim^{[\l]}_{\mu,n}\det \left(\hat{H}\right)\neq 0.$$
This implies  \[\det \left(S_{q}^{[\l]}\left(\hat{H}\right)\right)\neq 0.\]

\end{proof}

\pagebreak[3]
\begin{coro}\label{coro1}
Let $\l\in \left(\C^{*}/q^{n^{-1}\Z}\right)\setminus \widetilde{\Sigma}$. Then, $$S_{q}^{[\l]}\left(\hat{H}\right)\mathrm{Diag}\Big(\L_{C_{i}}\otimes E_{n_{i},d_{i},a_{i}} \Big)\in \mathrm{GL}_{m}\big(\mathcal{M}(\C^{*})\big)$$ is solution of~$\sq Y=BY$.
\end{coro}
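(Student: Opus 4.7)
The plan is to combine the three pieces already in place: Theorem \ref{theo2} provides a gauge transformation $\hat{H}$ relating $B$ to the explicit normal-form matrix $A$, the preceding Lemma upgrades the formal gauge transformation $\hat{H}$ to a meromorphic one $S_{q}^{[\l]}(\hat{H})$ realizing the same equivalence, and Theorem~\ref{theo2} also gives an explicit meromorphic fundamental matrix for the normal form. Composing them yields the desired solution.

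More precisely, set $M:=\mathrm{Diag}\Big(\L_{C_{i}}\otimes E_{n_{i},d_{i},a_{i}}\Big)$. First, $M\in \mathrm{GL}_{m}\bigl(\mathcal{M}(\C^{*})\bigr)$: each block $\L_{C_{i}}$ is built from the meromorphic-on-$\C^{*}$ functions $\L_{c}$ and $\ell_{q}$, each block $E_{n_{i},d_{i},a_{i}}$ is diagonal with entries that are integer powers of $\T_{q^{d_i}}$ (themselves meromorphic on $\C^{*}$), and $M$ is invertible since each $\L_{C_i}$ is (being constructed from $\mathrm{Diag}(\L_{d_i})e^{\log(N)\ell_q}$ with $d_i\in\C^*$) and each $E_{n_i,d_i,a_i}$ is (its entries being nonzero meromorphic functions). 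Combined with the Lemma, which states ${S_{q}^{[\l]}(\hat{H})\in \mathrm{GL}_{m}\bigl(\mathcal{M}(\C^{*})\bigr)}$, the product ${Y:=S_{q}^{[\l]}(\hat{H})\,M}$ lies in $\mathrm{GL}_{m}\bigl(\mathcal{M}(\C^{*})\bigr)$.

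The verification that $Y$ solves $\sq Y=BY$ is then a direct two-line computation. By Theorem~\ref{theo2}, $\sq M=AM$, and by the Lemma, $\sq\bigl(S_{q}^{[\l]}(\hat{H})\bigr)\,A\,S_{q}^{[\l]}(\hat{H})^{-1}=B$, hence
$$\sq Y=\sq\bigl(S_{q}^{[\l]}(\hat{H})\bigr)\cdot \sq M=\sq\bigl(S_{q}^{[\l]}(\hat{H})\bigr)\,A\,M=B\,S_{q}^{[\l]}(\hat{H})\,M=BY.$$

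There is essentially no obstacle here, since all the hard work (constructing the formal gauge transformation, applying successive $q$-Borel and $q$-Laplace transformations to its entries, and showing the resulting matrix is invertible and meromorphic on $\C^{*}$) has been performed in Theorems~\ref{theo2}, \ref{theo4} and in the preceding Lemma. The only thing to be careful about is to keep the order of multiplication consistent with the convention $P[A]_{\sq}=(\sq P)AP^{-1}$ used in the paper, so that the factorization $B=S_{q}^{[\l]}(\hat{H})[A]_{\sq}$ feeds correctly into the computation above.
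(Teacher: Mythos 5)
Your proof is correct and is precisely the argument the paper intends: the corollary is stated without proof because it follows immediately from the preceding Lemma (which gives $B=\sq\bigl(S_{q}^{[\l]}(\hat{H})\bigr)A\,S_{q}^{[\l]}(\hat{H})^{-1}$ with $S_{q}^{[\l]}(\hat{H})$ invertible and meromorphic) together with the defining relation $\sq M=AM$ from Theorem \ref{theo2}. Your two-line computation and the check that $M\in\mathrm{GL}_{m}\bigl(\mathcal{M}(\C^{*})\bigr)$ fill in exactly what the paper leaves implicit.
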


\pagebreak[3]
\subsection{Local analytic classification of linear $q$-difference equations.}\label{sec22}

Consider a linear $q$-difference system in coefficients in $\C(\{z\})$ that will satisfy an additional condition we explicit above.  The next theorem says that after an analytic gauge transformation, we may put this system in the Birkhoff-Guenther normal form. 
\pagebreak[3]
\begin{theo}[\cite{RSZ}, $\S 3.3.2$]
Let $B\in \mathrm{GL}_{m}\Big(\C(\{z\})\Big)$ and let us consider integers $n_{1},\dots,n_{k},d_{1},\dots,d_{k},m_{1},\dots,m_{k}$, and matrices $C_{1},\dots,C_{k}$ given by Theorem \ref{theo2}. Let us assume that all the $d_{i}\in \N^{*}$  are equal to $1$. Then, there exist 
\begin{itemize}
\item $U_{i,j}$, $m_{i}$ times $m_{j}$ matrices with coefficients in $\displaystyle \sum_{\nu=n_{i}}^{n_{j}-1}\C z^{\nu}$,
\item $F\in\mathrm{GL}_{m}\Big(\C\{z\}\Big)$, 
\end{itemize}
such that $B=F[D]_{\displaystyle\sq}$, where:
$$D:=\begin{pmatrix}
z^{n_{1}}C_{1}&\dots&\dots&\dots&\dots\\
0&\ddots&\dots&U_{i,j}&\dots \\
\vdots&\ddots&\ddots&\dots&\dots \\
\vdots&\dots&\ddots&\ddots&\dots \\
0&\dots&\dots&0&z^{n_{k}}C_{k}
\end{pmatrix}. $$
\end{theo}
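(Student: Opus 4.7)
The plan is to refine the formal normal form supplied by Theorem \ref{theo2} to an analytic one, by using the freedom in the choice of the off-diagonal blocks $U_{i,j}$ to absorb the divergent part of the formal gauge transformation. First, I would apply Theorem \ref{theo2} to $B$; it produces $\hat H \in \mathrm{GL}_{m}\bigl(\C[[z]]\bigr)$ such that $B = \hat H[A]_{\sq}$ with $A = \mathrm{Diag}\bigl(\L_{C_i} \otimes E_{n_i,1,a_i}\bigr)$. Under the hypothesis that every $d_{i}=1$, the block $E_{n_i,1,a_i}=\T_{q}(a_{i}z)^{n_{i}}$ is scalar, so $A$ operates on $\L_{C_i}$ as multiplication by $(a_{i}z)^{n_{i}}C_{i}$; absorbing the scalar $a_{i}^{n_{i}}$ into $C_{i}$ (a constant gauge transformation in $\mathrm{GL}_{m}(\C)$), we may assume $A = \mathrm{Diag}\bigl(z^{n_{i}}C_{i}\bigr)$, which is exactly the block-diagonal part of the target $D$.

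Next, I would look for $D = A + (U_{i,j})_{i<j}$ of the prescribed shape, and $F \in \mathrm{GL}_{m}(\C\{z\})$, such that $B = F[D]_{\sq}$. Writing $F = \hat H K^{-1}$, this is equivalent to constructing a \emph{formal} gauge $K \in \mathrm{GL}_{m}(\C[[z]])$ from $A$ to $D$ with the additional property that $\hat H K^{-1}$ converges. Because $A$ and $D$ share the same diagonal $z^{n_{i}}C_{i}$ with strictly increasing slopes $n_{1}<\dots<n_{k}$, uniqueness of the slope filtration forces $K$ to be block upper unitriangular; expanding $D = (\sq K)AK^{-1}$ block by block produces, for $i<j$, the $q$-Sylvester-type equation
$$
U_{i,j} \;=\; (\sq K_{i,j})\, z^{n_{j}} C_{j} \;-\; z^{n_{i}} C_{i}\, K_{i,j} \;+\; \Phi_{i,j}\bigl(K_{i',j'}\bigr),
$$
where $\Phi_{i,j}$ depends bilinearly on the $K_{i',j'}$ with $i<i'<j'<j$ (in particular vanishes when $j=i+1$).

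The heart of the proof is a cohomological analysis of the $q$-Sylvester operator
$$
\mathcal{S}_{i,j}\colon X \;\longmapsto\; (\sq X)\, z^{n_{j}} C_{j} \;-\; z^{n_{i}} C_{i}\, X,
$$
acting on matrix-valued Laurent series. A Newton-polygon calculation, using $n_{i}<n_{j}$ and the invertibility of $C_{i}$, $C_{j}$, shows that $\mathcal{S}_{i,j}$ is surjective modulo the polynomial subspace $\sum_{\nu=n_{i}}^{n_{j}-1}\C z^{\nu}\otimes \mathrm{Mat}_{m_{i}\times m_{j}}(\C)$, which exhausts its cokernel, and that $\mathcal{S}_{i,j}$ admits a right-inverse preserving the ring of convergent series. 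Running an induction on $j-i$, I would at each step declare $U_{i,j}$ to be the unique polynomial representative of the cokernel class of $\Phi_{i,j}(K_{i',j'})$ produced by the previously built blocks, and define $K_{i,j}$ as the corresponding convergent primitive.

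The main obstacle is to verify that $F := \hat H K^{-1}$ is genuinely convergent: the formal gauge $\hat H$ is divergent with $q$-Gevrey order controlled by the successive slope gaps $n_{j}-n_{i}$, so one must check that the $K_{i,j}$ produced by the Sylvester inversion match, slope-gap by slope-gap, this divergence. This is the $q$-analogue of the Birkhoff--Malgrange--Sibuya argument for meromorphic differential equations and ultimately reduces to the analytic cohomological classification of pure $q$-difference modules. Once convergence is established, $F$ and $D$ satisfy all the required properties.
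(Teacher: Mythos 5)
First, a remark on the comparison you were asked to survive: the paper does not prove this statement at all --- it is imported from \cite{RSZ}, $\S 3.3.2$, and used as a black box. So the only question is whether your argument stands on its own, and it does not: there is a genuine gap at its centre. The recursion you describe computes $U_{i,j}$ as the polynomial representative of the cokernel class of $\Phi_{i,j}(K_{i',j'})$ and $K_{i,j}$ as ``the corresponding convergent primitive''; this data is manufactured entirely from the formal normal form $A=\mathrm{Diag}(z^{n_{i}}C_{i})$ and the previously built blocks, and never sees $B$ or the divergent part of $\hat{H}$. Concretely, for $j=i+1$ you have $\Phi_{i,j}=0$, so your recursion returns $U_{i,i+1}=0$ and $K_{i,i+1}=0$; in the two-slope case this gives $K=\mathrm{Id}$ and $F=\hat{H}$, which is divergent in general. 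If your construction worked, every system with two integer slopes would be analytically equivalent to its block-diagonal formal normal form, contradicting the fact that the analytic classes inside a fixed formal class form a nontrivial space of dimension $\sum_{i<j}(n_{j}-n_{i})m_{i}m_{j}$ --- which is precisely what the $U_{i,j}$ are there to parametrize. There is also an internal inconsistency: you need $\hat{H}K^{-1}$ to converge, which forces $K$ to carry the same divergence as $\hat{H}$, yet each $K_{i,j}$ you build is by construction convergent. The two requirements are incompatible, and your closing appeal to ``the analytic cohomological classification of pure $q$-difference modules'' is essentially an appeal to the theorem being proved.

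What is missing is the genuinely analytic input: one must first show that the slope filtration of the $q$-difference module attached to $B$ is realized by an \emph{analytic} gauge transformation, i.e.\ that $B$ is $\C(\{z\})$-equivalent to some block upper triangular $\tilde{D}$ with diagonal blocks $z^{n_{i}}C_{i}$ and \emph{convergent} off-diagonal blocks. This is Sauloy's analytic factorization (\cite{RSZ}, $\S 2$--$3$), and it cannot be deduced from the formal statement of Theorem \ref{theo2}. Once that is granted, the part of your argument concerning the operator $X\mapsto(\sq X)z^{n_{j}}C_{j}-z^{n_{i}}C_{i}X$ is sound and is indeed the right tool: on convergent series it is injective with cokernel of dimension $(n_{j}-n_{i})m_{i}m_{j}$, represented by matrices with entries in $\sum_{\nu=n_{i}}^{n_{j}-1}\C z^{\nu}$, and an induction on $j-i$ then reduces the convergent off-diagonal blocks of $\tilde{D}$ to the polynomial blocks $U_{i,j}$ by a convergent unipotent gauge. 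But the Sylvester inversion must be fed the convergent triangular form of $B$, not data extracted from the formal normal form.
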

In $\S 3.3.3$ of \cite{RSZ}, it is shown the existence and the uniqueness of $$\hat{H}:=\begin{pmatrix}
\mathrm {Id}&\dots&\dots&\dots&\dots\\
0&\ddots&\dots&\hat{H}_{i,j}&\dots \\
\vdots&\ddots&\ddots&\dots&\dots \\ 
\vdots&\dots&\ddots&\ddots&\dots \\
0&\dots&\dots&0&\mathrm {Id}
\end{pmatrix}\in \mathrm{GL}_{m}\Big(\C[[z]]\Big),$$ formal gauge transformation, that satisfies
$$D=\hat{H}\Big[\mathrm{Diag} (z^{n_{i}}C_{i})\Big]_{\displaystyle\sq}.$$ 
In \cite{RSZ}, Theorem  6.1.2, it is proved that if for all ${i<j}$ we choose $\l_{i,j}\in \C^{*}/q^{(n_{j}-n_{i})^{-1}\Z}$ such that $$\l_{i,j}^{n_{j}-n_{i}}q^{-\frac{n_{j}-n_{i}-1}{2}}q^{\Z}\neq\frac{\a_{i'}}{\a_{j'}}q^{\Z},$$
for any $\a_{i'}\in \mathrm{Sp}(C_{i})$, that is the spectrum of $C_{i}$, and any $\a_{j'}\in \mathrm{Sp}(C_{j})$, then there exists an unique matrix $$\hat{H}^{\left[(\l_{i,j})\right]}:=\begin{pmatrix}
\mathrm {Id}&\dots&\dots&\dots&\dots\\
0&\ddots&\dots&\hat{H}^{\left[\l_{i,j}\right]}_{i,j}&\dots \\
\vdots&\ddots&\ddots&\dots&\dots \\ 
\vdots&\dots&\ddots&\ddots&\dots \\
0&\dots&\dots&0&\mathrm {Id}
\end{pmatrix}\in \mathrm{GL}_{m}\Big(\mathcal{M}(\C^{*})\Big),$$ solution of 
$D\hat{H}^{\left[(\l_{i,j})\right]}=\sq \left(\hat{H}^{\left[(\l_{i,j})\right]}\right)\mathrm{Diag} (z^{n_{i}}C_{i})$, such that for all $i<j$, and for $|z|$ close to $0$, the matrix $\hat{H}_{i,j}^{\left[\l_{i,j}\right]}$ has simple poles contained in the spirals $-\l_{i,j} q^{(n_{j}-n_{i})^{-1}\Z}$. \par
Assume now that $k=2$  and let us define the finite set $\Sigma'\subset \C^{*}/q^{(n_{2}-n_{1})^{-1}\Z}$ as follows:
 $$\Sigma :=\left\{\l \in \C^{*}/q^{(n_{2}-n_{1})^{-1}\Z}\left|\exists \a_{1} \in \mathrm{Sp}( C_{1}),\a_{2} \in \mathrm{Sp}(C_{2}), \hbox{ such that } \l^{n_{2}-n_{1}}q^{-\frac{n_{2}-n_{1}-1}{2}}q^{\Z}=\frac{\a_{1}}{\a_{2}}q^{\Z}\right\} \right. .$$
The next theorem says that in this case, the above solution is exactly the same as the one in Theorem \ref{theo4}. 

\pagebreak[3]
\begin{theo}\label{theo3}
Let $\l\in \left(\C^{*}/q^{(n_{2}-n_{1})^{-1}\Z}\right)\setminus \Sigma$. Then, the entries of $\mathcal{\hat{B}}_{n_{2}-n_{1}}\left(\hat{H}_{1,2}\right)$ belong to $\mathbb{H}_{n_{2}-n_{1},n_{2}-n_{1}}^{[\l]}$ and 
$$S_{q}^{[\l]}\left(\hat{H}\right)=\hat{H}^{\left[\l\right]}:=\begin{pmatrix}
\mathrm {Id}&\hat{H}_{1,2}^{[\l]}\\
0&\mathrm {Id}
\end{pmatrix}.$$
\end{theo}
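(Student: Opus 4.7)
From the Birkhoff-Guenther relation $D\hat{H}=\sq(\hat{H})\mathrm{Diag}(z^{n_i}C_i)$, reading off the upper-right block in the case $k=2$ yields
$$z^{n_2}\sq(\hat{H}_{1,2})C_2-z^{n_1}C_1\hat{H}_{1,2}=U_{1,2}.$$
Vectorising by $\mathbf{h}:=\mathrm{vec}(\hat{H}_{1,2})$, this becomes a single linear system whose Newton polygon has the two vertices $(0,n_1)$ and $(1,n_2)$, so the entries of $\hat{H}_{1,2}$ satisfy a $q$-difference equation with a unique positive slope $\mu_1=n_2-n_1=:N$. Hence Theorem \ref{theo1} applies with $r=1$ and $\k_1=K=n=N$, and the task is to check that the exceptional set it produces coincides (on the relevant spiral) with $\Sigma$.

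Apply $\mathcal{\hat{B}}_{N}$ entrywise. The two relevant exponents $i-j/\mu$ coincide (both equal $-n_1/N$), so (\ref{eq2}) turns the equation into the purely algebraic identity
$$\Bigl(\tfrac{\z^{n_2}}{q^{n_2(n_2-1)/(2N)}}(C_2^{\top}\otimes I)-\tfrac{\z^{n_1}}{q^{n_1(n_1-1)/(2N)}}(I\otimes C_1)\Bigr)\sq^{-n_1/N}\mathbf{g}=\mathcal{\hat{B}}_{N}(\mathrm{vec}(U_{1,2})),$$
with $\mathbf{g}:=\mathcal{\hat{B}}_{N}(\mathbf{h})$. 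Since the right-hand side is polynomial in $\z$, the entries of $\mathbf{g}$ are rational in $\z$, with possible poles exactly at those $\z$ satisfying $\z^{N}=q^{(n_1+n_2-1)/2}\a_1/\a_2$ for some $\a_i\in\mathrm{Sp}(C_i)$. Such a pole meets the spiral $\l q^{N^{-1}\Z}$ iff $\l^{N}q^{\Z}\ni q^{(n_1+n_2-1)/2}\a_1/\a_2$; since $(n_1+n_2-1)/2-(N-1)/2=n_1\in\Z$, one has $q^{(n_1+n_2-1)/2}q^{\Z}=q^{(N-1)/2}q^{\Z}$, and this is equivalent to $\l^{N}q^{-(N-1)/2}q^{\Z}=\a_1/\a_2\, q^{\Z}$, i.e.\ exactly the condition defining $\Sigma$. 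For $\l\notin\Sigma$, the entries of $\mathcal{\hat{B}}_{N}(\hat{H}_{1,2})$ are therefore rational and regular on a neighbourhood of $\l q^{N^{-1}\Z}$; rational functions trivially satisfy the $q^{1/N}$-exponential growth bound, and so they lie in $\mathbb{H}_{N,N}^{[\l]}$.

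By Theorem \ref{theo4} (applied with $r=1$), $S_{q}^{[\l]}(\hat{H})=\mathcal{L}_{N,N}^{[\l]}\circ\mathcal{\hat{B}}_{N}(\hat{H})$ is then meromorphic on $\C^{*}$ and satisfies the same equations as $\hat{H}$; by Remark \ref{rem2}, its only non-trivial entry $S_{q}^{[\l]}(\hat{H}_{1,2})$ has simple poles contained in the spiral $-\l q^{N^{-1}\Z}$. The matrix $\begin{pmatrix}\mathrm{Id}&S_{q}^{[\l]}(\hat{H}_{1,2})\\ 0&\mathrm{Id}\end{pmatrix}$ therefore meets every property characterising $\hat{H}^{[\l]}$ in the uniqueness statement of \cite{RSZ}, Theorem 6.1.2, forcing $S_{q}^{[\l]}(\hat{H}_{1,2})=\hat{H}_{1,2}^{[\l]}$. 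The main obstacle in this scheme is the pole-locus computation of the second paragraph: one has to carefully track the powers of $q$ to verify that the exceptional set arising from the rational Borel transform agrees exactly with the set $\Sigma$ coming from the spectral condition of \cite{RSZ}; once this is done the rest reduces to invoking Theorem \ref{theo1}/\ref{theo4} and the uniqueness from \cite{RSZ}.
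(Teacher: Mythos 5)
Your argument is correct and follows essentially the same route as the paper: derive the block relation $z^{n_2}\sq(\hat{H}_{1,2})C_2=z^{n_1}C_1\hat{H}_{1,2}-U_{1,2}$, apply $\mathcal{\hat{B}}_{n_2-n_1}$ to turn it into a purely algebraic equation whose singular locus is the spectral set defining $\Sigma$, conclude membership in $\mathbb{H}^{[\l]}_{n_2-n_1,n_2-n_1}$ for $\l\notin\Sigma$, and finish with the uniqueness statement of \cite{RSZ}. Your explicit Kronecker-product computation of the pole locus is exactly the content of the paper's Lemma \ref{lem3} on the eigenvalues of $X\mapsto C_1^{-1}XC_2$ (and your bookkeeping $\tfrac{n_1+n_2-1}{2}-\tfrac{N-1}{2}=n_1\in\Z$ checks out), so the two proofs coincide up to presentation.
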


\begin{proof}[Proof of Theorem \ref{theo3}]
 Using $D=\hat{H}\Big[\mathrm{Diag} (z^{n_{i}}C_{i})\Big]_{\displaystyle\sq}$, we find,
\begin{equation}\label{eq5}
z^{n_{2}}\sq \left(\hat{H}_{1,2}\right)C_{2}=z^{n_{1}}C_{1}\hat{H}_{1,2}-U_{1,2}.
\end{equation}

Hence, there exist $K\in \N^{*}$, and a finite set $\Sigma '  \subset \C^{*}/q^{K^{-1}\Z}$, such that if ${\l\in \left(\C^{*}/q^{K^{-1}\Z}\right)\setminus\Sigma '}$, then the matrix  $$S_{q}^{[\l]}\left(\hat{H}\right):=\begin{pmatrix}
\mathrm {Id}&S_{q}^{[\l]}\left(\hat{H}_{1,2}\right)\\
0&\mathrm {Id}
\end{pmatrix}\in \mathrm{GL}_{m}\Big(\mathcal{M}(\C^{*})\Big),$$ is solution of 
$DS_{q}^{[\l]}\left(\hat{H}\right)=\sq \left(S_{q}^{[\l]}\left(\hat{H}\right)\right)\mathrm{Diag} (z^{n_{i}}C_{i})$, and $S_{q}^{[\l]}\left(\hat{H}_{1,2}\right)$ has simple poles that are contained in the $q^{1/K}$-spirals $$-\l q^{K^{-1}\Z}.$$
We claim that  $K=n_{2}-n_{1}$. Since the $n_{2}-n_{1}$ is an integer, we have to prove that each entry of $\hat{H}_{1,2}$ satisfies a linear $q$-difference equation with maximal slope $n_{2}-n_{1}$. This is a direct consequence of (\ref{eq5}) and the fact that the entries of $C_{1}$ and $C_{2}$ belong to $\C$. Hence $K=n_{2}-n_{1}$ and the properties described just above the theorem tell us that if ${\l\in \left(\C^{*}/q^{(n_{2}-n_{1})^{-1}\Z}\right)}$ does not belongs to neither $\Sigma '$ or $\Sigma$, then $$S_{q}^{[\l]}\left(\hat{H}\right)=\hat{H}^{\left[\l\right]}.$$
To conclude to proof, we have to show that $\Sigma '\subset \Sigma $. Let ${\l\in \left(\C^{*}/q^{(n_{2}-n_{1})^{-1}\Z}\right)\setminus \Sigma}$ and let us prove that $\l \notin \Sigma '$. Applying $\mathcal{\hat{B}}_{n_{2}-n_{1}}$ to $\hat{H}_{1,2}$, we find using Lemma \ref{lem1} that 
$$\z^{n_{2}-n_{1}}q^{-\frac{n_{2}-n_{1}-1}{2}}\mathcal{\hat{B}}_{n_{2}-n_{1}}\left(\hat{H}_{1,2}\right)C_{2}
=C_{1}\mathcal{\hat{B}}_{n_{2}-n_{1}}\left(\hat{H}_{1,2}\right)-\mathcal{\hat{B}}_{n_{2}-n_{1}}\left(z^{-n_{1}}U_{1,2}\right).$$
Due to Lemma \ref{lem3} below, the entries of $\mathcal{\hat{B}}_{n_{2}-n_{1}}\left(\hat{H}_{1,2}\right)$ belong to $\mathbb{H}_{n_{2}-n_{1},n_{2}-n_{1}}^{[\l]}$. Therefore, we have $\l \notin \Sigma '$. 
\end{proof}

\pagebreak[3]
\begin{lem}\label{lem3}
Let $\a\in \C$ be an eigenvalue of $X\mapsto C_{1}^{-1}XC_{2}$. Then, $\a$ is the quotient of an eigenvalue of $C_{2}$ by an eigenvalue of $C_{1}$. 
\end{lem}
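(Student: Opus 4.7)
The plan is to reduce the statement to the classical Kronecker-product eigenvalue formula by vectorizing matrices. Let $\mathrm{vec}: M_{m_{1},m_{2}}(\C)\to \C^{m_{1}m_{2}}$ be the column-stacking isomorphism. The identity $\mathrm{vec}(AXB)=(B^{\top}\otimes A)\mathrm{vec}(X)$, applied with $A=C_{1}^{-1}$ and $B=C_{2}$, exhibits the linear endomorphism $L\colon X\mapsto C_{1}^{-1}XC_{2}$ as multiplication by $C_{2}^{\top}\otimes C_{1}^{-1}$ acting on $\C^{m_{1}m_{2}}$. Consequently, $\mathrm{Sp}(L)=\mathrm{Sp}\bigl(C_{2}^{\top}\otimes C_{1}^{-1}\bigr)$.

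Next I would invoke the standard fact that $\mathrm{Sp}(A\otimes B)=\{ab \mid a\in\mathrm{Sp}(A),\ b\in\mathrm{Sp}(B)\}$. A short way to see this, and what I would include for self-containment, is Schur's theorem: there exist unitary matrices $U_{1},U_{2}$ such that $U_{i}^{*}C_{i}U_{i}$ is upper triangular, so the same is true of $U_{i}^{*}C_{i}^{-1}U_{i}$ and of $(U_{2}\otimes U_{1})^{*}(C_{2}^{\top}\otimes C_{1}^{-1})(U_{2}\otimes U_{1})$; the latter matrix is upper triangular with diagonal entries $\b_{2,j}\b_{1,i}^{-1}$, where the $\b_{1,i}$ (resp.\ $\b_{2,j}$) are the diagonal entries, hence the eigenvalues, of $C_{1}$ (resp.\ $C_{2}$).

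Combining the two previous paragraphs gives $\mathrm{Sp}(L)=\{\b_{2}/\b_{1} \mid \b_{1}\in\mathrm{Sp}(C_{1}),\ \b_{2}\in\mathrm{Sp}(C_{2})\}$, which is exactly the claim. There is essentially no obstacle here: the only content is the Kronecker identification plus Schur triangularization, both routine. The reason the lemma is stated is simply to feed back into the proof of Theorem \ref{theo3}, where one needs to know that the avoidance set forced by the $q$-difference relation $\z^{n_{2}-n_{1}}q^{-(n_{2}-n_{1}-1)/2}\mathcal{\hat{B}}_{n_{2}-n_{1}}(\hat{H}_{1,2})C_{2}=C_{1}\mathcal{\hat{B}}_{n_{2}-n_{1}}(\hat{H}_{1,2})-\mathcal{\hat{B}}_{n_{2}-n_{1}}(z^{-n_{1}}U_{1,2})$ — namely the resonant $\l$ for which some eigenvalue of $X\mapsto C_{1}^{-1}XC_{2}$ lies in $\l^{n_{2}-n_{1}}q^{-(n_{2}-n_{1}-1)/2}q^{\Z}$ — is contained in the set $\Sigma$ defined via ratios of eigenvalues of $C_{1}$ and $C_{2}$.
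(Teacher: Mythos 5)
Your proof is correct in substance but takes a genuinely different route from the paper. You identify $L\colon X\mapsto C_{1}^{-1}XC_{2}$ with the Kronecker matrix $C_{2}^{\top}\otimes C_{1}^{-1}$ via vectorization and then read off $\mathrm{Sp}(L)$ from the standard formula for the spectrum of a tensor product; the paper instead argues by contraposition with the classical Sylvester-operator trick: if $\a$ is not a quotient of eigenvalues, then any $X$ with $\a C_{1}X=XC_{2}$ satisfies $P(\a C_{1})X=XP(C_{2})$ for every polynomial $P$, and taking $P$ to be the characteristic polynomial of $\a C_{1}$ forces $X=0$ because $P(C_{2})$ is invertible when $\a C_{1}$ and $C_{2}$ share no eigenvalue. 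The paper's argument is more elementary and self-contained (no vectorization, no unitary matrices, valid over any algebraically closed field), while yours yields the stronger statement that $\mathrm{Sp}(L)$ is \emph{exactly} the set of quotients, with multiplicities. One small repair in your Schur step: if $U_{2}^{*}C_{2}U_{2}$ is upper triangular, then $U_{2}^{*}C_{2}^{\top}U_{2}$ is in general not triangular (conjugating by $\overline{U_{2}}$ gives the \emph{lower} triangular $T_{2}^{\top}$); you should instead apply Schur directly to $C_{2}^{\top}$ to get a unitary $V_{2}$ with $V_{2}^{*}C_{2}^{\top}V_{2}$ upper triangular, whose diagonal entries are still the eigenvalues of $C_{2}$, and conjugate by $V_{2}\otimes U_{1}$. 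With that adjustment the argument is complete.
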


\begin{proof}
 Let $\a\in \C$ that is not the quotient of an eigenvalue of $C_{2}$ by an eigenvalue of~$C_{1}$. Let us prove that $\a$ is not an eigenvalue of $X\mapsto C_{1}^{-1}XC_{2}$. This is equivalent to prove that $1$ is not an eigenvalue of $X\mapsto \a^{-1}C_{1}^{-1}XC_{2}$, which is equivalent to the fact that $X\mapsto \a C_{1}X-XC_{2}$ is not bijective. Let $X$ such that $\a C_{1}X=XC_{2}$. Then, for all $P\in \C[T]$, $P(\a C_{1})X=XP(C_{2})$. Taking $P=P_{1}$, the characteristic polynomial associated to $\a C_{1}$, we find that $0=XP_{1}(C_{2})$. The assumption we have made on $\a$ tell us that $\a C_{1}$ and $C_{2}$ have no eigenvalues in common. This implies that $P_{1}(C_{2})$ is invertible, and therefore $X=0$. We obtain that  $X\mapsto \a C_{1}X-XC_{2}$ is bijective. Hence, $\a$ is not an eigenvalue of $X\mapsto C_{1}^{-1}XC_{2}$.
\end{proof}

\fussy
\pagebreak[3]
\nocite{Ad,And,Be,Ca,DR,DSK,DVH,DV02,DV09,Ro11,Trj,Chak}
\bibliographystyle{alpha}
\bibliography{C:/Users/thomas.dreyfus/Dropbox/Maths/biblio}

\def\udot#1{\ifmmode\oalign{$#1$\crcr\hidewidth.\hidewidth
  }\else\oalign{#1\crcr\hidewidth.\hidewidth}\fi} \def\cprime{$'$}
  \def\polhk#1{\setbox0=\hbox{#1}{\ooalign{\hidewidth
  \lower1.5ex\hbox{`}\hidewidth\crcr\unhbox0}}}
\begin{thebibliography}{DVRSZ03}

\bibitem[Abd60]{Abdi60}
Wazir~Hasan Abdi.
\newblock On {$q$}-{L}aplace transforms.
\newblock {\em Proc. Nat. Acad. Sci. India Sect. A}, 29:389--408, 1960.

\bibitem[Abd64]{Abdi64}
Wazir~Hasan Abdi.
\newblock Certain inversion and representation formulae for {$q$}-{L}aplace
  transforms.
\newblock {\em Math. Z.}, 83:238--249, 1964.

\bibitem[Ada31]{Ad}
Clarence~Raymond Adams.
\newblock Linear {$q$}-difference equations.
\newblock {\em Bull. Amer. Math. Soc.}, 37(6):361--400, 1931.

\bibitem[And00]{And}
Yves Andr{\'e}.
\newblock S\'eries {G}evrey de type arithm\'etique. {II}. {T}ranscendance sans
  transcendance.
\newblock {\em Ann. of Math. (2)}, 151(2):741--756, 2000.

\bibitem[Bal94]{B}
Werner Balser.
\newblock {\em From divergent power series to analytic functions}, volume 1582
  of {\em Lecture Notes in Mathematics}.
\newblock Springer-Verlag, Berlin, 1994.
\newblock Theory and application of multisummable power series.

\bibitem[B{\'e}z92]{Be}
Jean-Paul B{\'e}zivin.
\newblock Sur les \'equations fonctionnelles aux {$q$}-diff\'erences.
\newblock {\em Aequationes Math.}, 43(2-3):159--176, 1992.

\bibitem[Bug11]{Bu}
Virginie Bugeaud.
\newblock Classification analytique et th\'eorie de {G}alois locales des
  modules aux {$q$}-diff\'erences \`a pentes non enti\`eres.
\newblock {\em C. R. Math. Acad. Sci. Paris}, 349(19-20):1037--1039, 2011.

\bibitem[Bug12]{BuT}
Virginie Bugeaud.
\newblock {\em Groupe de {G}alois local des \'equations aux {$q$}-diff\'erences
  irr\'eguli\`eres}.
\newblock PhD thesis, Institut de Math\'ematiques de Toulouse, 2012.

\bibitem[Car12]{Ca}
Robert~Daniel Carmichael.
\newblock The {G}eneral {T}heory of {L}inear {$q$}-{D}ifference {E}quations.
\newblock {\em Amer. J. Math.}, 34(2):147--168, 1912.

\bibitem[Cha64]{Chak}
Anand Chak.
\newblock On some chains of {L}aplace transforms.
\newblock {\em Ga\udot nita}, 15:109--114, 1964.

\bibitem[DR08]{DR}
Anne Duval and Julien Roques.
\newblock Familles fuchsiennes d'\'equations aux ({$q$}-)diff\'erences et
  confluence.
\newblock {\em Bull. Soc. Math. France}, 136(1):67--96, 2008.

\bibitem[Dre14]{D3}
Thomas Dreyfus.
\newblock Confluence of meromorphic solutions of $q$-difference equations.
\newblock {\em Preprint available on arkiv}, 2014.

\bibitem[DSK05]{DSK}
Alberto De~Sole and Victor~G. Kac.
\newblock On integral representations of {$q$}-gamma and {$q$}-beta functions.
\newblock {\em Atti Accad. Naz. Lincei Cl. Sci. Fis. Mat. Natur. Rend. Lincei
  (9) Mat. Appl.}, 16(1):11--29, 2005.

\bibitem[DV02]{DV02}
Lucia Di~Vizio.
\newblock Arithmetic theory of {$q$}-difference equations: the {$q$}-analogue
  of {G}rothendieck-{K}atz's conjecture on {$p$}-curvatures.
\newblock {\em Invent. Math.}, 150(3):517--578, 2002.

\bibitem[DV09]{DV09}
Lucia Di~Vizio.
\newblock Local analytic classification of {$q$}-difference equations with
  {$\vert q\vert =1$}.
\newblock {\em J. Noncommut. Geom.}, 3(1):125--149, 2009.

\bibitem[DVH12]{DVH}
Lucia Di~Vizio and Charlotte Hardouin.
\newblock Descent for differential {G}alois theory of difference equations:
  confluence and {$q$}-dependence.
\newblock {\em Pacific J. Math.}, 256(1):79--104, 2012.

\bibitem[DVRSZ03]{DVRSZ}
Lucia Di~Vizio, Jean-Pierre Ramis, Jacques Sauloy, and Changgui Zhang.
\newblock \'{E}quations aux {$q$}-diff\'erences.
\newblock {\em Gaz. Math.}, (96):20--49, 2003.

\bibitem[DVZ09]{DVZ}
Lucia Di~Vizio and Changgui Zhang.
\newblock On {$q$}-summation and confluence.
\newblock {\em Ann. Inst. Fourier (Grenoble)}, 59(1):347--392, 2009.

\bibitem[Mal95]{M95}
Bernard Malgrange.
\newblock Sommation des s\'eries divergentes.
\newblock {\em Exposition. Math.}, 13(2-3):163--222, 1995.

\bibitem[MZ00]{MZ}
Fabienne Marotte and Changgui Zhang.
\newblock Multisommabilit\'e des s\'eries enti\`eres solutions formelles d'une
  \'equation aux {$q$}-diff\'erences lin\'eaire analytique.
\newblock {\em Ann. Inst. Fourier (Grenoble)}, 50(6):1859--1890 (2001), 2000.

\bibitem[Pra86]{Pra}
Cornelis Praagman.
\newblock Fundamental solutions for meromorphic linear difference equations in
  the complex plane, and related problems.
\newblock {\em J. Reine Angew. Math.}, 369:101--109, 1986.

\bibitem[Ram85]{R85}
Jean-Pierre Ramis.
\newblock Ph\'enom\`ene de {S}tokes et filtration {G}evrey sur le groupe de
  {P}icard-{V}essiot.
\newblock {\em C. R. Acad. Sci. Paris S\'er. I Math.}, 301(5):165--167, 1985.

\bibitem[Ram92]{R92}
Jean-Pierre Ramis.
\newblock About the growth of entire functions solutions of linear algebraic
  {$q$}-difference equations.
\newblock {\em Ann. Fac. Sci. Toulouse Math. (6)}, 1(1):53--94, 1992.

\bibitem[Roq08]{Ro08}
Julien Roques.
\newblock Galois groups of the basic hypergeometric equations.
\newblock {\em Pacific J. Math.}, 235(2):303--322, 2008.

\bibitem[Roq11]{Ro11}
Julien Roques.
\newblock Generalized basic hypergeometric equations.
\newblock {\em Invent. Math.}, 184(3):499--528, 2011.

\bibitem[RS07]{RS07}
Jean-Pierre Ramis and Jacques Sauloy.
\newblock The {$q$}-analogue of the wild fundamental group. {I}.
\newblock In {\em Algebraic, analytic and geometric aspects of complex
  differential equations and their deformations. {P}ainlev\'e hierarchies},
  RIMS K\^oky\^uroku Bessatsu, B2, pages 167--193. Res. Inst. Math. Sci.
  (RIMS), Kyoto, 2007.

\bibitem[RS09]{RS09}
Jean-Pierre Ramis and Jacques Sauloy.
\newblock The {$q$}-analogue of the wild fundamental group. {II}.
\newblock {\em Ast\'erisque}, (323):301--324, 2009.

\bibitem[RSZ13]{RSZ}
Jean-Pierre Ramis, Jacques Sauloy, and Changgui Zhang.
\newblock Local analytic classification of q-difference equations.
\newblock {\em Ast\'erisque}, (355), 2013.

\bibitem[RZ02]{RZ}
Jean-Pierre Ramis and Changgui Zhang.
\newblock D\'eveloppement asymptotique {$q$}-{G}evrey et fonction th\^eta de
  {J}acobi.
\newblock {\em C. R. Math. Acad. Sci. Paris}, 335(11):899--902, 2002.

\bibitem[Sau00]{S00}
Jacques Sauloy.
\newblock Syst\`emes aux {$q$}-diff\'erences singuliers r\'eguliers:
  classification, matrice de connexion et monodromie.
\newblock {\em Ann. Inst. Fourier (Grenoble)}, 50(4):1021--1071, 2000.

\bibitem[Sau03]{S03}
Jacques Sauloy.
\newblock Galois theory of {F}uchsian {$q$}-difference equations.
\newblock {\em Ann. Sci. \'Ecole Norm. Sup. (4)}, 36(6):925--968 (2004), 2003.

\bibitem[Sau04a]{S04b}
Jacques Sauloy.
\newblock Algebraic construction of the {S}tokes sheaf for irregular linear
  {$q$}-difference equations.
\newblock {\em Ast\'erisque}, (296):227--251, 2004.
\newblock Analyse complexe, syst{\`e}mes dynamiques, sommabilit{\'e} des
  s{\'e}ries divergentes et th{\'e}ories galoisiennes. I.

\bibitem[Sau04b]{S04}
Jacques Sauloy.
\newblock La filtration canonique par les pentes d'un module aux
  {$q$}-diff\'erences et le gradu\'e associ\'e.
\newblock {\em Ann. Inst. Fourier (Grenoble)}, 54(1):181--210, 2004.

\bibitem[Trj33]{Trj}
Waldemar~Joseph Trjitzinsky.
\newblock Analytic theory of linear {$q$}-difference equations.
\newblock {\em Acta Math.}, 61(1):1--38, 1933.

\bibitem[vdPR07]{VdPR}
Marius van~der Put and Marc Reversat.
\newblock Galois theory of {$q$}-difference equations.
\newblock {\em Ann. Fac. Sci. Toulouse Math. (6)}, 16(3):665--718, 2007.

\bibitem[vdPS03]{VdPS}
Marius van~der Put and Michael~F. Singer.
\newblock {\em Galois theory of linear differential equations}, volume 328 of
  {\em Grundlehren der Mathematischen Wissenschaften [Fundamental Principles of
  Mathematical Sciences]}.
\newblock Springer-Verlag, Berlin, 2003.

\bibitem[Zha99]{Z99}
Changgui Zhang.
\newblock D\'eveloppements asymptotiques {$q$}-{G}evrey et s\'eries
  {$Gq$}-sommables.
\newblock {\em Ann. Inst. Fourier (Grenoble)}, 49(1):vi--vii, x, 227--261,
  1999.

\bibitem[Zha00]{Z00}
Changgui Zhang.
\newblock Transformations de {$q$}-{B}orel-{L}aplace au moyen de la fonction
  th\^eta de {J}acobi.
\newblock {\em C. R. Acad. Sci. Paris S\'er. I Math.}, 331(1):31--34, 2000.

\bibitem[Zha01]{Z01}
Changgui Zhang.
\newblock Sur la fonction {$q$}-gamma de {J}ackson.
\newblock {\em Aequationes Math.}, 62(1-2):60--78, 2001.

\bibitem[Zha02]{Z02}
Changgui Zhang.
\newblock Une sommation discr\`ete pour des \'equations aux {$q$}-diff\'erences
  lin\'eaires et \`a coefficients analytiques: th\'eorie g\'en\'erale et
  exemples.
\newblock In {\em Differential equations and the {S}tokes phenomenon}, pages
  309--329. World Sci. Publ., River Edge, NJ, 2002.

\bibitem[Zha03]{Z03}
Changgui Zhang.
\newblock Sur les fonctions {$q$}-{B}essel de {J}ackson.
\newblock {\em J. Approx. Theory}, 122(2):208--223, 2003.

\end{thebibliography}

\end{document}